\documentclass[11pt]{amsart}


\usepackage{verbatim}

\usepackage{amsmath,amsthm,amssymb}
\usepackage[all]{xy}
\usepackage{xspace}
\usepackage{enumerate}

\newcommand{\Z}{\ensuremath{\mathbb{Z}}\xspace}

\newcommand{\C}{\ensuremath{\mathbb{C}}\xspace}

\newcommand{\nt}{\ensuremath{\mathbb{N}}\xspace}


\newcommand{\norm}[1]{\ensuremath{\|#1\|}}

\DeclareMathOperator{\gen}{span}

\DeclareMathOperator{\supp}{supp}

\theoremstyle{plain}
\newtheorem{thm}{Theorem}[section]
\newtheorem{prop}[thm]{Proposition}
\newtheorem{lem}[thm]{Lemma}
\newtheorem{cor}[thm]{Corollary}          

\theoremstyle{definition}
\newtheorem{defn}[thm]{Definition}
\newtheorem{exmp}[thm]{Example}

\theoremstyle{remark} 
\newtheorem{rem}[thm]{Remark}

\numberwithin{equation}{section}


\begin{document}
\title[Dilations of interaction groups]{Dilations of interaction
  groups that extend actions of Ore semigroups}
\author{Fernando Abadie}
\address{Centro de Matem\'atica-FC
, Universidad de la Rep\'ublica. 11\,400 Igu\'a
4225, Montevideo, URUGUAY.} 
\email{fabadie@cmat.edu.uy}
\thanks{Partially supported by CNPq, Brazil, Processo
    151654/2006-9}

\subjclass[2000]{Primary 46L05}

\keywords{Interaction groups}

\date{}

\begin{abstract}
We show that every interaction group extending an action of an Ore
semigroup by injective unital endomorphisms of a $C^*$-algebra, admits
a dilation to an action of the corresponding enveloping group on
another unital $C^*$-algebra, of which the former is a
$C^*$-subalgebra: the interaction group is obtained by composing the
action with a conditional expectation. The dilation is
essentially unique if a certain natural condition of minimality is
imposed. If the action is induced by covering maps on the spectrum,
then the expectation is faithful.    
\end{abstract}

\maketitle

\section{Introduction and preliminaries}
\par The notions of interaction groups and their crossed products have
been introduced and studied by Exel in \cite{exends},
with the aim of dealing with irreversible dynamical systems. The
mentioned paper emerges as a culmination of previous work in the
subject appeared in \cite{exend}, \cite{exinteractions},
\cite{exfindex}, \cite{exvershik}. Related work may be found as well
in \cite{nl}, \cite{br}, \cite{danilo}, \cite{deaconu}. 
\par Recently, Exel and Renault studied in \cite{exren} a family of
interaction groups that extend actions of some semigroups on unital
commutative $C^*$-algebras. 
\par Suppose 
that $X$ is a compact Hausdorff space and $\theta:X\to X$ is a
covering map. For $A=C(X)$, let $\alpha:A\to A$ be
the dual map of  
$\theta$, i.e.: $\alpha(a)=a\circ\theta$, which is a unital injective
endomorphism of $A$. In case there exists a transfer operator
(\cite{exend}) for $\alpha$, that is, a positive linear map 
$\mathcal{L}:A\to A$ such that
$\mathcal{L}(\alpha(a)b)=a\mathcal{L}(b)$, $\forall 
a,b\in A$, then $V:\mathbb{Z}\to B(A)$ (here $B(A)$ is the algebra of
bounded operators from $A$ into itself) given by  
$V_n=\begin{cases}\alpha^n & n\geq 0\\ \mathcal{L}^{-n}&
  n<0\end{cases}$ is called an interaction group
(Definition~\ref{defn:pig}). 
This interaction group is clearly an
extension of the action $\bar{\alpha}:\mathbb{N}\times 
A\to A$ given by $(n,a)\mapsto\alpha^n(a)$. Conversely, it can be
shown that if $W:\mathbb{Z}\to B(A)$ is an interaction group that
extends $\bar{\alpha}$, then $W_{-1}$ is a transfer operator for
$\alpha$, and $W$ is retrieved from $\alpha$ and $W_{-1}$ from the
construction above. That is: interaction groups that extend 
$\bar{\alpha}$ are in a natural bijection with transfer operators for
$\alpha$. In the same way, interaction groups that extend the action
of an Ore semigroup correspond to semigroups of transfer operators
corresponding to the endomorphisms of the action. In the case of
actions on commutative algebras the work in \cite{exren} shows that
one can replace transfer operators by \textit{cocycles} (see
Definition~\ref{defn:cocycle}). We will show later that an interaction
group as the one above can be written as the composition of an action
$\beta$ with a conditional expectation $F$:~$V_n=F\beta_n$, $\forall
n\in \Z$, a decomposition that reflects the combination of the 
deterministic and probabilistic elements included in the concept of
interaction group.               
\par On the other hand, it seems that interaction groups are closely
related with partial actions. Propositions~\ref{prop:pa}
and~\ref{prop:env} below are instances of this relation. Moreover,
under certain conditions one may construct interaction groups from
actions of groups and conditional expectations, in a way that
resembles the construction 
of partial actions by the restriction of global ones. In fact, suppose 
that $A$ is a $C^*$-subalgebra of the unital $C^*$-algebra $B$,
$F:B\to B$ is a conditional expectation with range $A$, and 
$\beta:G\times  
B\to B$ is an action of a group $G$ on~$B$. Let $F_t:B\to B$ be given
by $F_t:=\beta_tF\beta_{t^{-1}}$. Then $F_t$ is a conditional
expectation onto $\beta_t(A)$, $\forall t\in G$. It is not hard to
prove that if $FF_t=F_tF$, $\forall t\in G$, then $V:G\to B(A)$ such
that $V_t(a)=F(\beta_t(a))$, $\forall a\in A, t\in G$ is an
interaction group (provided $F(\beta_t(1_A))=1_A$, $\forall t\in G$,
see~\ref{prop:1} below).
\par With the same spirit of the work done in \cite{fa}, although with
different methods, 
we show in the present paper that any interaction group that extends
an action of an Ore semigroup by unital injective endomorphisms (for
instance those studied in \cite{exren}) is of this form, that is,  
it can be obtained by composing an action with a conditional
expectation. The existence of the action is due to Laca's Theorem 
(see \cite{ml} and Theorem~\ref{thm:ml} below) on the dilation of
actions of Ore semigroups. The conditional expectation is constructed
as the limit of the directed system of transfer operators
corresponding to the endomorphisms of the Ore semigroup action. 
\bigskip
\par The structure of the present paper is the following. In the rest
of this section we study some relations between interaction groups and
partial actions and we introduce the notion of dilation of an
interaction group. In the next section we prove our main result, 
Theorem~\ref{thm:main}, and in the final one we see how this theorem
applies, with a refinement, to the interaction groups studied by Exel 
and Renault in~\cite{exren}.   

\subsection{Interaction groups}
\par We show here how to get interaction groups from suitable pairs of
actions and conditional expectations. Recall that a partial
representation of a group $G$ on a Banach space $A$ is a map
$V:G\to B(A)$, the Banach algebra of bounded linear operators on $A$,
such that $\begin{cases} 
         V_e=Id & \textrm{($e$ the unit of $G$)}\\  
         V_{s^{-1}}V_sV_t=V_{s^{-1}}V_{st} & \forall s,t\in G\\ 
         V_sV_tV_{t^{-1}}=V_{st}V_{t^{-1}} & \forall s,t\in G \\ 
        \end{cases}$  
\begin{defn}\label{defn:pig} 
An interaction group is a triple $(A,G,V)$ where $A$ is a
unital $C^*$-algebra, $G$ is a group, and $V$ is a map from $G$ into 
$B(A)$,  which satisfies: 
\begin{enumerate}
 \item $V_t$ is a positive unital map, $\forall t\in G$.
 \item $V$ is a partial representation. 
 \item $V_t(ab)=V_t(a)V_t(b)$ if either $a$ or $b$ belongs to
       $V_{t^{-1}}(A)$. 
\end{enumerate}
If the group $G$ is understood we will put just $(A,V)$ (or even $V$
if $A$ is understood as well) instead of $(A,G,V)$. A morphism
$(A,G,V)\stackrel{\psi}{\to} (A',G,V')$ is a unital homomorphism of
$C^*$-algebras $\psi:A\to A'$ such that $\psi V_t=V'_t\psi$, $\forall
t\in G$.  
\end{defn} 
\par It will be useful for our purposes to consider the following
couple of categories, $\mathcal{T}_G$ and $\mathcal{D}_G$ associated
to a group~$G$. The objects of $\mathcal{T_G}$ are triples
$T=(B,\beta,F)$, where $\beta$ is an action of the group $G$ on the
unital $C^*$-algebra~$B$, and $F:B\to B$ is a conditional expectation,
that is, a norm one idempotent whose range is a $C^*$-subalgebra of
$B$. Recall that a conditional expectation $F$ is a positive 
$F(B)$-bimodule map. If $T=(B,\beta,F)$, 
$T'=(B',\beta',F')\in\mathcal{T}_G$, by a morphism $\phi:T\to T'$ we
mean a unital homomorphism of $C^*$-algebras $\phi:B\to B'$ such that
$\phi F=F'\phi$ and $\phi\beta_t=\beta_t'\phi$, $\forall t\in G$. The
category $\mathcal{D}_G$ is the full subcategory of $\mathcal{T}_G$
whose objects $(B,\beta,F)$ satisfy the following two conditions:
a)~$F\beta_tF(1)=F(1)$, $\forall t\in G$, and b)~$F_rF_s=F_sF_r$,
$\forall r,s\in G$, where $F_r=\beta_rF\beta_{r^{-1}}$, $\forall r\in
G$. Note that $F(1)$ is the unit of $F(B)$, and that $F_r$ is a
conditional expectation with range $\beta_r(F(B))$.       
\begin{prop}\label{prop:1}
Let $T=(B,\beta,F)\in\mathcal{T}_G$, and $A:=F(B)$. If $FF_t=F_tF$,
$\forall t\in G$, then:
\begin{enumerate}
\item $F_rF_s=F_sF_r$, $\forall r,s\in G$, and $F_rF_s$ is a
      conditional expectation with range $\beta_r(A)\cap\beta_s(A)$.    
\item\label{V} If $V_t:=F\beta_t|_A$, $\forall t\in G$, then the map $V:G\to
      B(A)$ given by $t\mapsto V_t$ is a partial representation and
      satisfies condition 3. of \ref{defn:pig}. Moreover the range of
      $V_t$ is $A\cap \beta_t(A)$, 
      $\forall t\in G$.
\item If $V$ is the map defined in \ref{V}, then $V$ is an
      interaction group if and only if $F(\beta_t(1_A))=1_A$ for every
      $t\in G$. That is: $V$ is an interaction group if and only if
      $T\in\mathcal{D}_G$.       
\end{enumerate}
\end{prop}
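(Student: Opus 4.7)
The plan is to derive everything from two key identities: $\beta_t F = F_t\beta_t$ (the definition of $F_t$) and the commutation hypothesis $FF_t = F_tF$. For part~(1), I would compute $F_rF_s$ directly. Expansion gives $F_rF_s = \beta_r F\beta_{r^{-1}s}F\beta_{s^{-1}}$; setting $u = r^{-1}s$, the commutation combined with $\beta_u F = F_u\beta_u$ lets us rewrite $F\beta_uF = FF_u\beta_u = F_u F\beta_u = \beta_u F\beta_{u^{-1}}F\beta_u$, so that substitution yields the symmetric expression $\beta_sF\beta_{s^{-1}r}F\beta_{r^{-1}} = F_sF_r$. The composition is then a positive contractive idempotent (idempotency using the commutation); its range is contained in $\beta_r(A)\cap\beta_s(A)$ (images of $F_r$ and $F_s$), and every $x$ in that intersection is fixed by both $F_r$ and $F_s$, hence by $F_rF_s$. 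Tomiyama's theorem then identifies $F_rF_s$ as a conditional expectation onto this $C^*$-subalgebra.

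For part~(2), the whole argument rests on a single identity: for $a\in A$,
\[
V_sV_t(a) = F\beta_sF\beta_t(a) = FF_s\beta_{st}(a) = F_sF\beta_{st}(a) = F_sV_{st}(a),
\]
so $V_sV_t = F_sV_{st}|_A$. Composing this with $V_{s^{-1}}$ on the left (resp.\ substituting $t\mapsto t^{-1}$) reduces both sides of the two partial-representation relations to the common expressions $F\beta_{s^{-1}}V_{st}|_A = V_{s^{-1}}V_{st}$ and $F_{st}V_s|_A = V_{st}V_{t^{-1}}$. The same identity with $s \mapsto t$ and $t\mapsto t^{-1}$ gives $V_tV_{t^{-1}} = F_t|_A$, so $V_t(A)\subseteq A\cap\beta_t(A)$; conversely, every $x$ in that intersection is fixed by $F_t$ and thus equals $V_t(V_{t^{-1}}(x))$. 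Consequently $V_{t^{-1}}(A) = A\cap\beta_{t^{-1}}(A)$, and any $a$ in this set satisfies $\beta_t(a)\in A$, so $V_t(a)=\beta_t(a)$; condition~(3) of Definition~\ref{defn:pig} then reduces to the $A$-bimodule property of $F$: $V_t(ab) = F(\beta_t(a)\beta_t(b)) = \beta_t(a)F(\beta_t(b)) = V_t(a)V_t(b)$ whenever $a\in V_{t^{-1}}(A)$.

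For part~(3), conditions~(2) and~(3) of Definition~\ref{defn:pig} are already established, and positivity of each $V_t$ is inherited from $F$ and $\beta_t$. The only remaining requirement is that each $V_t$ be unital: $V_t(1_A) = F(\beta_t(1_A)) = 1_A$. Since $1_A = F(1)$, this reads $F\beta_tF(1) = F(1)$, which is exactly condition~(a) in the definition of $\mathcal{D}_G$; condition~(b) was verified in part~(1). The main technical hurdle is part~(1): one must check carefully that the commutation $FF_t = F_tF$ really propagates to commutation of the whole family $\{F_r\}_{r\in G}$, and that the algebraic rewrites preserve contractivity, positivity, and the correct range. Once this is done, parts~(2) and~(3) follow cleanly from the single identity $V_sV_t = F_sV_{st}|_A$.
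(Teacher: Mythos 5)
Your proposal is correct and follows essentially the same route as the paper: both rest on the identities $\beta_tF=F_t\beta_t$ and $FF_t=F_tF$, with your key identity $V_sV_t=F_sV_{st}|_A$ being a mild repackaging of the paper's direct computation of the two partial-representation relations (note only that reducing $V_sV_tV_{t^{-1}}=F_sF_{st}V_s$ to $F_{st}V_s$ uses the commutation $F_sF_{st}=F_{st}F_s$ from part~(1) together with $F_sV_s=V_s$, a one-line check you leave implicit). Your treatment of condition~(3) of Definition~\ref{defn:pig} via $V_t=\beta_t$ on $V_{t^{-1}}(A)=A\cap\beta_{t^{-1}}(A)$ is a slightly cleaner variant of the paper's computation with $F_t$, but it is the same use of the $A$-bimodule property of $F$.
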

\begin{proof}
We have
\[\beta_rF\beta_{r^{-1}}\beta_sF\beta_{s^{-1}}
=\beta_sF_{s^{-1}r}F\beta_{s^{-1}}
=\beta_sFF_{s^{-1}r}\beta_{s^{-1}}
=\beta_sF\beta_{s^{-1}}\beta_rF\beta_{r^{-1}}\beta_s\beta_{s^{-1}}.\]
That is, $F_rF_s=F_sF_r$, and therefore $F_rF_s$ is a conditional 
expectation with range $F_r(B)\cap F_s(B)$. On the other hand 
$F_r(B)=\beta_rF\beta_{r^{-1}}(B)=\beta_rF(B)=\beta_r(A)$. Hence
$F_rF_s(B)=\beta_r(A)\cap\beta_s(A)$.
\par As for \textit{2.}, $V$ is a partial representation:  
\begin{gather*}
V_{s^{-1}}V_sV_t=FF_{s^{-1}}F\beta_t=FF_{s^{-1}}\beta_t=V_{s^{-1}}V_{st},\\
V_sV_tV_{t^{-1}}=F\beta_{st}F_{t^{-1}}F\beta_{t^{-1}}|_A
=V_{st}F\beta_{t^{-1}}Id_A=V_{st}V_{t^{-1}}.
\end{gather*}
\par If $x\in A$ and $a=V_{t^{-1}}(x)$, $b\in B$, then: 
\begin{gather*}
V_t(ab)=V_t(V_{t^{-1}}(x)b)=
F(F_t(x)\beta_t(b))
=F(F_tF(x)\beta_t(b))\\=F(FF_t(x)\beta_t(b))=FF_t(x)F(\beta_t(b))
=V_tV_{t^{-1}}(x)V_t(b)=V_t(a)V_t(b).
\end{gather*}
 Since $V_t(ba)=V_t(a^*b^*)^*$,
we have shown that $V$ satisfies condition \textit{3.} of
\ref{defn:pig}. On the other hand, 
$V_t(A)=F\beta_t(A)=FF_t(\beta_t(A))=A\cap\beta_t(A)$, because $FF_t$ is a
conditional expectation with range $A\cap\beta_t(A)\subseteq\beta_t(A)$.  
\par Now, if $V$ is an interaction group, 
then $F\beta_t(1_A)=V_t(1_A)=1_A$, $\forall t\in G$. Conversely, if 
$F\beta_t(1_A)=1_A$ $\forall t\in G$, then $V_t$ is a positive unital
map. In addition $V_e=F\beta_e|_A=FId_A=Id_A$; hence $V$ is an
interaction group. 
\end{proof}
\subsection{The partial action of an interaction group}
\par We will see now that every interaction group has naturally
associated a partial action of the group on the same algebra. Recall
that a partial action of a discrete group $G$ on a set $X$
is a pair $(\{X_t\}_{t\in G},\{\gamma_t\}_{t\in G})$ where, for every
$t\in G$, $X_t$ is a subset of $X$, $\gamma_t:X_{t^{-1}}\to X_t$ is a 
bijection, and $\gamma_{st}$ extends $\gamma_s\gamma_t$, $\forall
s,t\in G$. It is also assumed that $\gamma_e=id_X$. When $X$ is a
$C^*$-algebra, it is usually supposed that $X_t$ is an ideal and that
$\gamma_t$ is an isomorphism of $C^*$-algebras. So we warn the reader
that for the partial actions we consider in this paper the sets $X_t$ 
will be unital $C^*$-subalgebras.  

\begin{prop}\label{prop:pa}
Suppose that $V:G\to B(A)$ is an interaction group. For $t\in
G$ let $A_t:=V_t(A)$, and $\gamma_t:A_{t^{-1}}\to A_t$ be such that
$\gamma_t(a)=V_t(a)$. Then:
\begin{enumerate}
\item Every $A_t$ is a unital $C^*$-subalgebra of $A$ (with the same 
      unit), and $\gamma_t$ is an isomorphism  
      between $A_{t^{-1}}$ and $A_t$, $\forall t\in G$. 
\item The map $E_t:A\to A$ given by $E_t:=V_tV_{t^{-1}}$ is a
      conditional expectation onto $A_t$, $\forall t\in G$, and
      $E_rE_s=E_sE_r$, $\forall r,s\in G$.  
\item The pair $\gamma:=(\{A_t\}_{t\in G},\{\gamma_t\}_{t\in G})$ is a
      partial action of $G$ on~$A$.   
\end{enumerate}
\end{prop}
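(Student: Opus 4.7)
The plan is to deduce all three statements from two identities that hold in any partial representation. Starting from the axioms in \ref{defn:pig}, multiplying $V_{s^{-1}}V_sV_t=V_{s^{-1}}V_{st}$ on the left by $V_s$ and using the consequence $V_sV_{s^{-1}}V_s=V_s$ yields $V_sV_t=E_sV_{st}$, where $E_t:=V_tV_{t^{-1}}$; a symmetric manipulation of $V_sV_tV_{t^{-1}}=V_{st}V_{t^{-1}}$ gives $V_sV_t=V_{st}E_{t^{-1}}$. Equating these two expressions for $V_sV_t$ produces the fundamental intertwining
\[ E_s V_r = V_r E_{r^{-1}s}, \qquad \forall\, r,s \in G, \]
from which the rest of the proof is routine.

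For part (1), the identity $V_tV_{t^{-1}}V_t=V_t$ gives $E_t^2=E_t$, so $A_t=E_t(A)$ is norm-closed. Positivity and unitality of $V_t$ ensure $1\in A_t$ and $A_t=A_t^*$. For closure under multiplication, given $a,b\in A_t$ set $a'=V_{t^{-1}}(a),\,b'=V_{t^{-1}}(b)\in A_{t^{-1}}$; since $E_t$ is the identity on $A_t$ we have $V_t(a')=a$ and $V_t(b')=b$, and because $a',b'\in V_{t^{-1}}(A)$ condition (3) of \ref{defn:pig} applies to give $ab=V_t(a')V_t(b')=V_t(a'b')\in A_t$. The same computation shows that $\gamma_t=V_t|_{A_{t^{-1}}}$ is a $*$-homomorphism onto $A_t$, with inverse $\gamma_{t^{-1}}=V_{t^{-1}}|_{A_t}$, since $V_{t^{-1}}V_t=E_{t^{-1}}$ restricts to the identity on $A_{t^{-1}}$.

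For part (2), $E_t$ is a positive unital idempotent onto the $C^*$-subalgebra $A_t$, hence a conditional expectation. Commutativity follows by applying the intertwining twice:
\[ E_sE_r = E_sV_rV_{r^{-1}} = V_rE_{r^{-1}s}V_{r^{-1}} = V_rV_{r^{-1}}E_s = E_rE_s. \]

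For part (3), $\gamma_e=V_e|_A=\mathrm{Id}_A$ is immediate. Suppose $a\in A_{t^{-1}}$ with $V_t(a)\in A_{s^{-1}}$; the identity $V_sV_t=V_{st}E_{t^{-1}}$ together with $E_{t^{-1}}(a)=a$ gives $V_sV_t(a)=V_{st}(a)$, so $\gamma_s\gamma_t(a)=\gamma_{st}(a)$ provided $a\in A_{(st)^{-1}}$. For this last inclusion, specializing $V_\alpha V_\beta=V_{\alpha\beta}E_{\beta^{-1}}$ to $\alpha=(st)^{-1},\beta=s$ yields $V_{(st)^{-1}}V_s=V_{t^{-1}}E_{s^{-1}}$, and
\[ V_{(st)^{-1}}V_{st}(a) = V_{(st)^{-1}}V_sV_t(a) = V_{t^{-1}}E_{s^{-1}}V_t(a) = V_{t^{-1}}V_t(a) = a, \]
using $E_{s^{-1}}V_t(a)=V_t(a)$ and $V_{t^{-1}}V_t(a)=a$ from the hypotheses. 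The only technical burden is organizing these partial-representation manipulations; once the intertwining $E_sV_r=V_rE_{r^{-1}s}$ is secured, the three items follow mechanically.
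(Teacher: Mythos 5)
Your proof is correct, and all the partial-representation manipulations check out: the two identities $V_sV_t=E_sV_{st}$ and $V_sV_t=V_{st}E_{t^{-1}}$ do follow from the axioms via $V_sV_{s^{-1}}V_s=V_s$, and the resulting intertwining $E_sV_r=V_rE_{r^{-1}s}$ does everything you ask of it. The route differs from the paper's mainly in completeness and organization. For item (3) the underlying computation is essentially the author's: he also shows $\gamma_s\gamma_t(c)=V_{st}(c)$ using $V_sV_tV_{t^{-1}}=V_{st}V_{t^{-1}}$, but then establishes $c\in A_{(st)^{-1}}$ by applying $\gamma_{t^{-1}s^{-1}}$ to $\gamma_s\gamma_t(c)$ and unwinding $V_{t^{-1}s^{-1}}V_s=V_{t^{-1}}V_{s^{-1}}V_s$, whereas you get the same conclusion from $V_{(st)^{-1}}V_s=V_{t^{-1}}E_{s^{-1}}$; these are the same identity dressed differently. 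Where you genuinely add value is in items (1) and (2): the paper simply cites \cite{exends} for (1) and gives no argument at all for (2) (in particular the commutation $E_rE_s=E_sE_r$ is never proved there, only illustrated afterwards in the special case $E_r=FF_r|_A$ of Proposition~\ref{prop:1}), while your two-line derivation $E_sE_r=V_rE_{r^{-1}s}V_{r^{-1}}=V_rV_{r^{-1}}E_s=E_rE_s$ makes the whole proposition self-contained and shows the commutativity is a purely algebraic consequence of the partial-representation axioms. The one step you pass over quickly is that a positive unital idempotent onto a $C^*$-subalgebra is a conditional expectation in the paper's sense (a norm-one idempotent with $C^*$-subalgebra range); this is fine, since positivity and unitality give $\|E_t\|=\|E_t(1)\|=1$, but it deserves a word.
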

\begin{proof}
We already know by \cite[3.2]{exends} that $A_t$ is a unital
$C^*$-subalgebra of $A$ with unit $V_t(1_A)=1_A$, and that $\gamma_t$
is an isomorphism, $\forall t\in G$. Since $V$ is a partial
representation we have that $\gamma_e=V_e=Id$. 
Suppose now that $c$ belongs to the domain of 
$\gamma_s\gamma_t$, that is, $c\in A_{t^{-1}}$ is such that
$\gamma_t(c)\in A_{s^{-1}}$.  
Then $\gamma_s\gamma_t(c)\in A_s$ and
$\gamma_s\gamma_t(c)=V_sV_t(V_{t^{-1}}(\gamma_t(c))) 
=V_{st}(V_{t^{-1}}(\gamma_t(c)))=V_{st}(c)\in A_{st}$. Then
$\gamma_s\gamma_t(c)\in A_s\cap A_{st}$, and we may apply
$\gamma_{t^{-1}s^{-1}}$ to $\gamma_s\gamma_t(c)$. Since $V$ is a
partial action we obtain:  
\[\gamma_{t^{-1}s^{-1}}\gamma_s\gamma_t(c)
=V_{t^{-1}s^{-1}}V_s\gamma_t(c)
=V_{t^{-1}}V_{s^{-1}}V_s\gamma_t(c)
=\gamma_{t^{-1}}\gamma_{s^{-1}}\gamma_s\gamma_t(c)=c, \]
whence $\gamma_{st}(c)=\gamma_s\gamma_t(c)$. This shows that
$\gamma_{st}$ extends $\gamma_s\gamma_t$, $\forall s,t\in G$, and
therefore $\gamma$ is a partial action. 
\end{proof}

\par Observe that if $V$ is an interaction group of the type
considered in \ref{prop:1}, then $E_r=FF_r|_A$, and
$E_rE_s=FF_rF_s|_A$ (with the notations of \ref{prop:1} and
\ref{prop:pa}).  
\bigskip
\par The usual notion of partial actions of groups on $C^*$-algebras  
requires that the domains of the partial automorphisms are
ideals. In the commutative case, partial actions on a $C^*$-algebra 
correspond exactly with partial actions on the spectrum of the
algebra, where the domains of the partial homeomorphisms are open
subsets of the spectrum (\cite[Proposition~1.5]{fpoids}). Instead,
partial actions on unital commutative $C^*$-algebras as the ones
considered in~\ref{prop:pa} lead to a different notion of partial
action on a topological space. In fact, let $A=C(X)$ be a unital
commutative $C^*$-algebra, and let $\gamma=(\{A_t\},\{\gamma_t\})$ be
a partial action of~$G$ on~$A$, where each $A_t$ is a unital
subalgebra of $A$, with the same unit. Then the dual notion of the
partial action $\gamma$ should be expressed in terms of the spectra of
the subalgebras $A_t$ and the maps induced by $\gamma$ between them.
Although we will not give here the exact conditions that such a
collection of spaces and maps must satisfy, it is clear that the result
is not a partial action in the usual sense, as the spectrum of $A_t$
is not a subspace but a quotient of $X$. 
 
\subsection{Dilations of interaction groups}
We introduce next the notion of dilation of an interaction group $V$,
and we study its relation with the partial action associated with
$V$.  
\begin{defn}\label{defn:dil}
Let $V:G\to B(A)$ be an interaction group. 
A dilation of $V$ is a pair $(i,T)$, where
$T=(B,\beta,F)\in\mathcal{T}_G$ and $i:A\to B$ is a homomorphism of
$C^*$-algebras such that $iV_t=F\beta_ti$, $\forall t\in G$. If
$B=\overline{\textrm{span}}\{\beta_ti(a):\,a\in A,t\in G\}$, we say
that the dilation is minimal. The dilation is called faithful if so is
$F$, and it is called admissible if $T\in\mathcal{D}_G$ (recall that a
positive map $F$ is called faithful when $b\neq 0$ implies
$F(b^*b)\neq 0$).    
\end{defn}
\begin{prop}\label{prop:restriction}
Let $V:G\to B(A)$ be an interaction group, and suppose that 
$(i,T)$ is a minimal dilation of $V$, where $T=(B,\beta,F)$. Then we
have $F((FF_t-F_tF)(b)^*(FF_t-F_tF)(b))=0$, $\forall b\in B$.   
\end{prop}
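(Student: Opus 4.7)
The plan is to verify the identity on a norm-dense subspace of $B$ and then extend by continuity. First I would observe that in a minimal dilation one has $F(B) = i(A)$: setting $t = e$ in $iV_t = F\beta_t i$ gives $Fi = i$, so $i(A) \subseteq F(B)$; conversely $F(\beta_s i(a)) = iV_s(a) \in i(A)$, and since $i(A)$ is closed and the $\beta_s i(a)$ are total in $B$, continuity of $F$ gives $F(B) \subseteq i(A)$. Since $F$, $F_t$, involution and multiplication are norm-continuous, it suffices to prove the claim when $b$ lies in the algebraic linear span of the elements $\beta_s i(a)$.

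For a single $b = \beta_s i(a)$, direct computation using $F\beta_r i = iV_r$ yields
\[FF_t(b) = iV_tV_{t^{-1}s}(a), \qquad F_tF(b) = \beta_t\, i\, V_{t^{-1}}V_s(a).\]
The key algebraic fact is the partial-representation identity $V_tV_{t^{-1}s} = V_tV_{t^{-1}}V_s$, which follows from $V_{t^{-1}}V_tV_{t^{-1}s} = V_{t^{-1}}V_s$ (a substitution in $V_{s^{-1}}V_sV_t = V_{s^{-1}}V_{st}$) after left-multiplication by $V_t$, using $V_tV_{t^{-1}}V_t = V_t$. Setting $d := V_{t^{-1}}V_s(a) \in V_{t^{-1}}(A)$ and $y := \beta_t i(d)$, the identity reads $FF_t(b) = iV_t(d) = F(y)$ while $F_tF(b) = y$, so
\[(FF_t - F_tF)(b) = F(y) - y.\]

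The rest is a Schwarz-defect calculation for the conditional expectation. Expanding $(F(y) - y)^*(F(y) - y)$ and using the $F(B)$-bimodule property of $F$ (with $F(y) \in F(B) = i(A)$), the three cross terms each collapse to $F(y)^*F(y)$, leaving
\[F\bigl((F(y) - y)^*(F(y) - y)\bigr) = F(y^*y) - F(y)^*F(y).\]
Now $y^*y = \beta_t\, i(d^*d)$, hence $F(y^*y) = iV_t(d^*d)$, while $F(y)^*F(y) = i\bigl(V_t(d)^*V_t(d)\bigr)$. Since $d \in V_{t^{-1}}(A)$, condition~3 of Definition~\ref{defn:pig} gives $V_t(d^*d) = V_t(d)^*V_t(d)$, so the defect vanishes.

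The passage to a finite linear combination $b = \sum_k \lambda_k \beta_{s_k} i(a_k)$ is immediate: with $d := \sum_k \lambda_k V_{t^{-1}}V_{s_k}(a_k)$, still in the linear subspace $V_{t^{-1}}(A)$, the same argument goes through with $y := \beta_t i(d)$. I expect the main obstacle to be recognizing that $(FF_t - F_tF)(b)$ has the form $F(y) - y$ with $y$ of the special shape $\beta_t i(d)$, $d\in V_{t^{-1}}(A)$; once this reformulation is in place, condition~3 of an interaction group is exactly the input needed for the Schwarz inequality for $F$ to be an equality on $y$, which is the substance of the claim.
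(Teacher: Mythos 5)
Your proof is correct and follows essentially the same route as the paper's: reduce by density and continuity to $b=\beta_s i(a)$, compute $FF_t(b)=iV_tV_{t^{-1}s}(a)$ and $F_tF(b)=\beta_t iV_{t^{-1}}V_s(a)$, and kill the difference using the partial-representation identity $V_tV_{t^{-1}s}=V_tV_{t^{-1}}V_s$, the $F(B)$-bimodule property, and condition~3 of Definition~\ref{defn:pig}. Your repackaging of the final computation as the vanishing of the Schwarz defect $F(y^*y)-F(y)^*F(y)$ is only a cosmetic variation on the paper's term-by-term expansion.
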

\begin{proof}
We must show that
$F\beta_tF\beta_{t^{-1}}(b)-\beta_tF\beta_{t^{-1}}F(b)$ belongs to the
left ideal $L_F:=\{b\in
B:\, F(b^*b)=0\}$ of $B$, $\forall  b\in B$. Since $B$ is the closed
linear span of the set $\cup_{s\in G}\beta_si(A)$, it is enough to
prove that
$F\beta_tF\beta_{t^{-1}}(\beta_si(a))-\beta_tF\beta_{t^{-1}}F(\beta_si(a))\in 
L_F$, that is, $V_tV_{t^{-1}s}(a)-\beta_tV_{t^{-1}}V_s(a)\in L_F$,
$\forall s\in G$, $a\in A$. Since $F$ is an
$A$-bimodule map which is the identity operator on $A$, and
since $F\beta_t|_A=V_t$, we have that the expression 
$F\big((V_tV_{t^{-1}s}(a)-\beta_tV_{t^{-1}}V_s(a))^*
(V_tV_{t^{-1}s}(a)-\beta_tV_{t^{-1}}V_s(a))\big)$ is equal to 
$V_tV_{t^{-1}s}(a^*)\big(V_tV_{t^{-1}s}(a)-V_tV_{t^{-1}}V_s(a)\big)
-V_tV_{t^{-1}}V_s(a^*)\big(V_tV_{t^{-1}s}(a)-V_tV_{t^{-1}}V_s(a)\big)$,
which is zero because $V$ is a partial representation.        
\end{proof}
\begin{cor}\label{cor:minfaithful}
Any minimal and faithful dilation of an interaction
group is admissible.
\end{cor}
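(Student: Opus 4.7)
The plan is to verify the two conditions that define $\mathcal{D}_G$ for the dilation triple $T=(B,\beta,F)$: (a) $F\beta_tF(1_B)=F(1_B)$ for every $t\in G$, and (b) $F_rF_s=F_sF_r$ for every $r,s\in G$. Condition (b) should fall straight out of the preceding proposition. Indeed, Proposition \ref{prop:restriction} supplies the identity $F\bigl((FF_t-F_tF)(b)^*(FF_t-F_tF)(b)\bigr)=0$ for all $b\in B$, and since $F$ is faithful this forces $(FF_t-F_tF)(b)=0$ for every $b$, i.e.\ $FF_t=F_tF$ for all $t\in G$. Applying Proposition \ref{prop:1}(1) to the triple $T$ then yields the desired commutation $F_rF_s=F_sF_r$.

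For condition (a), I would use minimality (rather than faithfulness) to identify $F(1_B)$ with $i(1_A)$. From the dilation relation $F\beta_s i=iV_s$, every spanning element $\beta_s i(a)$ is sent by $F$ into $i(A)$; by linearity and continuity this gives $F(B)\subseteq i(A)$. Conversely, setting $s=e$ yields $F(i(a))=iV_e(a)=i(a)$, so $F$ restricts to the identity on $i(A)$, and therefore $F(B)=i(A)$. Since $F(1_B)$ is the unit of $F(B)$ and $i(1_A)$ is the unit of the $C^*$-subalgebra $i(A)$, these two idempotents must coincide: $F(1_B)=i(1_A)$. The chain $F\beta_tF(1_B)=F\beta_ti(1_A)=iV_t(1_A)=i(1_A)=F(1_B)$, where the last equality uses that $V_t$ is unital, then closes (a).

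There is essentially no obstacle: the heavy lifting was performed in Proposition \ref{prop:restriction}, and the rest is bookkeeping. The only mildly subtle point is recognizing that faithfulness is needed only for (b), whereas (a) is a purely structural consequence of minimality together with the unitality of the $V_t$'s — a division of labor which also explains why the corollary singles out both hypotheses.
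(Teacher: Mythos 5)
Your argument is correct and follows exactly the route the paper intends (the corollary is stated without proof as an immediate consequence of Proposition~\ref{prop:restriction}): faithfulness of $F$ turns the conclusion of that proposition into $FF_t=F_tF$, and Proposition~\ref{prop:1}(1) then gives the commutation of the $F_r$'s. Your verification of condition (a) via $F(B)=i(A)$ and unitality of the $V_t$'s is a detail the paper leaves implicit, and it is handled correctly.
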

\par Suppose that $\beta$ is an action of $G$ on the $C^*$-algebra
$B$, and that $A$ is a $C^*$-subalgebra of $A$. The restriction of
$\beta$ to $A$ is the partial action $\beta|_A:=(\{A'_t\}_{t\in G},
\{\gamma'_t\}_{t\in G})$, where $A'_t:=A\cap\beta_t(A)$ and
$\gamma'_t(a):=\beta_t(a)$, $\forall a\in A'_{t^{-1}}$, $t\in
G$. In case that the $C^*$-algebra generated by $\{\beta_t(a):\, a\in
A, t\in G\}$ is all of~$B$, we say that $\beta$ is an
\textbf{enveloping action} for~$\gamma'$.    

\begin{prop}\label{prop:env}
Suppose that $V:G\to B(A)$ is an interaction group with dilation
$(i,(B,\beta,F))$, where $A$ is a $C^*$-subalgebra of $B$ and $i:A\to
B$ is the natural inclusion.  Let $\gamma$ be the partial action of
$G$ on $A$ given by Proposition~\ref{prop:pa}, and let 
$\gamma':=\beta|_A$. Then $A_t\supseteq A'_t:=A\cap \beta_t(A)$ and
$\gamma_t(a)=\gamma'_t(a)$, $\forall t\in G$, $a\in A'_{t^{-1}}$. If
the dilation is admissible then $\gamma=\beta|_A$. In particular if
the dilation is faithful then $\gamma$ is the restriction
of $\beta$ to $A$.  
\end{prop}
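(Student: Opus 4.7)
The plan is to separate the three assertions and proceed through them in order, beginning with a preliminary observation: evaluating the dilation identity $iV_t = F\beta_t i$ at $t=e$ gives $i = Fi$, so (identifying $A$ with $i(A)\subseteq B$) the conditional expectation $F$ restricts to the identity on $A$. Consequently $V_t(a) = F(\beta_t(a))$ for every $a\in A$ and $t\in G$, and this is the relation I will exploit repeatedly.

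For the first two assertions, given $b\in A'_t = A\cap\beta_t(A)$ I would write $b = \beta_t(a)$ with $a\in A$; since $\beta_t(a)=b\in A$, this forces $a\in A\cap\beta_{t^{-1}}(A)=A'_{t^{-1}}$. Applying $F$ then yields
\[
\gamma_t(a) = V_t(a) = F(\beta_t(a)) = \beta_t(a) = b = \gamma'_t(a),
\]
which simultaneously places $b\in V_t(A)=A_t$ and identifies $\gamma_t$ with $\gamma'_t$ on $A'_{t^{-1}}$.

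For the admissible case I plan to prove the reverse inclusion $A_t\subseteq A'_t$ by direct computation. Given $c = V_t(a)\in A_t$ with $a\in A$, one has
\[
V_{t^{-1}}(c) = F\beta_{t^{-1}}F\beta_t(a) = FF_{t^{-1}}(a), \qquad \beta_{t^{-1}}(c) = \beta_{t^{-1}}F\beta_t(a) = F_{t^{-1}}(a).
\]
The commutation $FF_{t^{-1}}=F_{t^{-1}}F$ from admissibility, combined with $F(a)=a$, forces $FF_{t^{-1}}(a)=F_{t^{-1}}F(a)=F_{t^{-1}}(a)$, so the two displayed expressions coincide. Since $V_{t^{-1}}(c)\in A$, the element $\beta_{t^{-1}}(c)$ also lies in $A$, placing $c\in \beta_t(A)\cap A = A'_t$. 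Together with the previous paragraph this yields $A_t=A'_t$ and $\gamma=\beta|_A$.

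The faithful case is where I expect the main obstacle, since faithfulness of $F$ alone does not obviously force admissibility: Proposition~\ref{prop:restriction} provides the required commutation only modulo minimality. I would therefore reduce to the minimal setting by setting $B' := \overline{\textrm{span}}\{\beta_t(A) : t\in G\}$, which is both $\beta$-invariant and $F$-invariant (the latter because $F(\beta_t(a)) = V_t(a) \in A \subseteq B'$). The triple $(B',\beta|_{B'},F|_{B'})$ is then a minimal dilation of $V$, and it inherits faithfulness of $F$, so Corollary~\ref{cor:minfaithful} makes it admissible. Applying the admissible case to this restricted dilation gives $\gamma = (\beta|_{B'})|_A$, and since $A\cap\beta_t(A)$ is computed identically whether inside $B$ or $B'$, this coincides with $\beta|_A$, completing the proof.
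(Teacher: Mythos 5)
Your preliminary observation and the first three paragraphs are correct and essentially reproduce the paper's own argument: the paper likewise deduces $A'_t\subseteq A_t$ and $\gamma'_t=\gamma_t|_{A'_{t^{-1}}}$ from the chain $a\in A'_{t^{-1}}\iff\beta_t(a)\in A\iff\beta_t(a)=F\beta_t(a)=V_t(a)$, and for the admissible case it computes, for $a\in A_t$, that $a=V_tV_{t^{-1}}(a)=FF_t(a)=F_tF(a)=\beta_t(F\beta_{t^{-1}}F(a))\in A\cap\beta_t(A)$ --- which is your computation written with $F_t$ in place of $F_{t^{-1}}$.

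The faithful case is where your argument has a genuine gap. You are right that faithfulness alone does not literally place you in the hypotheses of Corollary~\ref{cor:minfaithful}, but your repair --- passing to $B':=\overline{\mathrm{span}}\{\beta_t(A):t\in G\}$ and declaring $(B',\beta|_{B'},F|_{B'})$ a minimal faithful dilation --- does not typecheck: $B'$ is a closed, $\beta$- and $F$-invariant \emph{subspace} of $B$, but there is no reason for it to be closed under multiplication (a product $\beta_s(a)\beta_t(b)$ with $s\neq t$ need not lie in the linear span of the $\beta_r(A)$), so $(B',\beta|_{B'},F|_{B'})$ need not be an object of $\mathcal{T}_G$ and Corollary~\ref{cor:minfaithful} cannot be applied to it. Replacing $B'$ by the $C^*$-algebra it generates does not help, since the resulting dilation need not be minimal in the sense of Definition~\ref{defn:dil}. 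The correct fix is more direct, and is what the paper's citation of Proposition~\ref{prop:restriction} is really pointing at: the proof of that proposition establishes $F\big((FF_t-F_tF)(b)^*(FF_t-F_tF)(b)\big)=0$ for every $b$ of the special form $\beta_s i(a)$ \emph{without} invoking minimality (minimality is used only to propagate the identity from such $b$ to all of $B$); taking $s=e$ gives it for $b=a\in A$, and faithfulness then yields $FF_t(a)=F_tF(a)$ for all $a\in A$ and $t\in G$, which is exactly the identity your admissible-case computation consumes. With that substitution your argument closes.
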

\begin{proof}
If $a\in A$, then $a\in A'_{t^{-1}}\iff\beta_t(a)\in A\iff
\beta_t(a)=F\beta_t(a)\iff \beta_t(a)=V_t(a)$. Then if $a\in
A'_{t^{-1}}$ we have $\gamma'_t(a)=\beta_t(a)=V_t(a)\in A_t$, which
shows that $A'_t\subseteq A_t$ and
$\gamma_t'=V_t|_{A_{t^{-1}}'}=\gamma_t|_{A_{t^{-1}}'}$. On the other
hand, if $FF_t=F_tF$, then if $a\in A_t$ we have:
\[a=V_tV_{t^{-1}}(a)
=FF_t(a)=F_tF(a)=\beta_t(F\beta_{t^{-1}}F(a))\in
A\cap\beta_t(A)=A_t',\]   
whence $A_t=A_t'$, and $\gamma_t=\gamma_t'$. 
The last two assertions follow from
Proposition~\ref{prop:restriction} and
Corollary~\ref{cor:minfaithful}.     
\end{proof}
\begin{cor}\label{cor:enveloping}
Suppose that $V:G\to B(A)$ is an interaction group with admissible
dilation $(i,(B,\beta,F))$, where $i:A\to B$ is an embedding (i.e.:
$i$ is injective). Then the restriction of $\beta$ to 
$C:=\overline{span}\{\beta_ti(a):\,t\in G, a\in A\}$ is an enveloping
action for the partial action $\gamma$ of $G$ on $A$ given by
Proposition~\ref{prop:pa}. In particular, if the dilation is minimal
then $\beta$ is an enveloping action for $\gamma$.  
\end{cor}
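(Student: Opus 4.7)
The plan is to reduce to Proposition~\ref{prop:env} by restricting the admissible dilation $(i,(B,\beta,F))$ to the $\beta$-invariant subalgebra $C$. Since $i$ is injective, I identify $A$ with $i(A)\subseteq B$, so that $A$ becomes a $C^*$-subalgebra of $B$ and $i$ is the natural inclusion.

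First I would verify that $C$ is a $\beta$-invariant $C^*$-subalgebra of $B$ containing $A$. The $\beta$-invariance of the generating set follows from $\beta_s(\beta_t i(a))=\beta_{st}i(a)\in C$, closure under the involution from $(\beta_t i(a))^*=\beta_t i(a^*)\in C$, and the inclusion $A\subseteq C$ by taking $t=e$. The delicate point---and the main obstacle of the argument---is checking that the closed linear span $C$ is closed under multiplication; handling products of the form $\beta_s i(a)\cdot\beta_t i(b)=\beta_s(i(a)\beta_{s^{-1}t}i(b))$ forces the use of admissibility ($F_rF_s=F_sF_r$, together with $F(B)=A$), or else the reinterpretation of $C$ as the $C^*$-subalgebra generated by the given set, which sits inside $B$ and, in the admissible setting, one expects to coincide with the closed linear span.

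Next I would show that $F$ maps $C$ into $A$: on the generating set $F(\beta_t i(a))=iV_t(a)\in A$, and the general case follows by continuity. Thus $F|_C:C\to C$ is a conditional expectation with range $A$, and the triple $(C,\beta|_C,F|_C)$ inherits admissibility from $(B,\beta,F)$, because the defining identities $F\beta_tF(1)=F(1)$ and $F_rF_s=F_sF_r$ hold throughout $B$, hence on the $\beta$-invariant subalgebra $C$. Consequently $(i,(C,\beta|_C,F|_C))$ is an admissible dilation of $V$ in which $A$ sits as a $C^*$-subalgebra and $i$ is the inclusion.

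Now Proposition~\ref{prop:env} applied to this restricted dilation yields $\gamma=\beta|_A$, i.e., $A_t=A\cap\beta_t(A)$ and $\gamma_t=\beta_t|_{A_{t^{-1}}}$ for every $t\in G$. Since by construction $C$ is generated as a $C^*$-algebra by $\{\beta_t(a):t\in G,\,a\in A\}$, the very definition of enveloping action gives that $\beta|_C$ is an enveloping action for $\gamma$. The final assertion is then immediate: if the original dilation $(i,(B,\beta,F))$ is minimal, then $B=C$ by Definition~\ref{defn:dil}, and so $\beta$ itself is an enveloping action for $\gamma$.
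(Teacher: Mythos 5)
Your argument reaches the right conclusion, but by a more laborious route than the statement requires, and the detour is precisely where your only real difficulty appears. The paper offers no proof because it regards this as immediate from Proposition~\ref{prop:env} together with the definition of enveloping action: after identifying $A$ with $i(A)$ (legitimate since $i$ is injective), Proposition~\ref{prop:env} applies to the given admissible dilation $(i,(B,\beta,F))$ \emph{as it stands} and yields $\gamma=\beta|_A$, i.e.\ $A_t=A\cap\beta_t(A)$ and $\gamma_t=\beta_t|_{A_{t^{-1}}}$. Both $A$ and $\beta_t(A)$ lie inside $C$, so these domains and maps do not change when the ambient algebra is shrunk from $B$ to $C$; hence the restriction of $\beta|_C$ to $A$ is still $\gamma$, and since $C$ is generated by $\{\beta_ti(a)\}$ this is exactly the definition of an enveloping action. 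The notion of enveloping action involves only $\beta$ and $A$, not the conditional expectation, so your steps showing $F(C)\subseteq A$ and that $(C,\beta|_C,F|_C)$ is again an admissible dilation are superfluous --- and they saddle you with obligations you do not discharge, e.g.\ that $C$ is a unital $C^*$-algebra (required of objects of $\mathcal{T}_G$), which is not obvious since $i(1_A)$ need not dominate the projections $\beta_ti(1_A)$.

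The one point you rightly single out --- whether the closed linear span $\overline{\mathrm{span}}\{\beta_ti(a)\}$ is closed under multiplication --- you leave unresolved, and admissibility does not obviously supply it. But this is a defect of the formulation rather than of the substance: the paper's definition of enveloping action is phrased in terms of the $C^*$-algebra \emph{generated} by $\{\beta_t(a)\}$, so $C$ should simply be read as that generated subalgebra (the escape you yourself propose). With that reading the ``in particular'' clause survives, since minimality gives $\overline{\mathrm{span}}\{\beta_ti(a)\}=B$ and hence the generated algebra is also $B$; and in the only place the corollary is actually invoked (Proposition~\ref{prop:env2}, for the dilation of Theorem~\ref{thm:main}), $B$ is the closure of the directed union $\bigcup_{r\in P}\beta_{r^{-1}}(A)$ of subalgebras, so the span is an algebra anyway and the issue evaporates.
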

\section{The dilation} 
A cancelative monoid $P$ is called an Ore
semigroup if $Pr\cap Ps\neq \emptyset$, $\forall r,s\in P$. It follows
by induction that $P$ is an Ore semigroup if and only if 
$P_{t_1}\cap\ldots\cap P_{t_n}\neq\emptyset$, $\forall
t_1,\ldots,t_n\in P$. Then $P$ is partially ordered by the relation
$r\leq s\iff s\in Pr$ (equivalently: $r\leq s\iff Pr\supseteq Ps$),
and it is even directed by that relation.  
\par Any cancelative abelian monoid $P$ is an  Ore semigroup. In fact,
such a monoid embeds in its Grothendieck group $G$, and every element
$t\in G$ can be written as $t=v^{-1}u$, with $u,v\in P$. Therefore, if
$r,s\in P$, writing $rs^{-1}=u^{-1}v$, with $u,v\in P$, gives
$t:=ur=vs\in Pr\cap Ps$, so $P$ is an Ore semigroup (and $P\ni t\geq
r,s$). More generally, we have the following theorem \cite[Theorem
1.1.2]{ml}, which shows that there is a functor from the category of
Ore semigroups into the category of groups:  
\begin{thm}[Ore, Dubreil]\label{thm:oredubreil}
A semigroup $P$ can be embedded in a group $G$ with $P^{-1}P=G$ if and
only if it is an Ore semigroup. In this case the group $G$ is
determined up to canonical isomorphism and every semigroup
homomorphism $\phi$ from $P$ into a group $H$ extends uniquely to a
group homomorphism $\varphi:G\to H$.  
\end{thm}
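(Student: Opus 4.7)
The ``only if'' direction is immediate: if $P$ embeds in a group $G$ with $G = P^{-1}P$, then cancellativity in $G$ restricts to $P$, and for $r, s \in P$ the element $rs^{-1} \in G$ has the form $u^{-1}v$ with $u, v \in P$, whence $ur = vs$, giving an element of $Pr \cap Ps$.

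For the ``if'' direction I would construct $G$ as a quotient of $P \times P$, viewing the pair $(a, b)$ as the formal fraction $a^{-1}b$. Declare $(a, b) \sim (c, d)$ iff there exist $x, y \in P$ with $xa = yc$ and $xb = yd$. Reflexivity and symmetry are clear, and transitivity follows from one application of the Ore condition to the two left multipliers arising from two successive instances of the relation. Multiplication is defined by
\[
[(a, b)] \cdot [(c, d)] := [(xa, yd)] \quad \text{where } xb = yc,
\]
with $x, y \in P$ furnished by the Ore condition applied to $\{b, c\}$. The identity of $G$ is $[(e, e)]$, $e$ being the identity of the monoid, the inverse of $[(a, b)]$ is $[(b, a)]$, and the map $\iota : P \to G$, $p \mapsto [(e, p)]$, is an injective semigroup homomorphism satisfying $[(a, b)] = \iota(a)^{-1} \iota(b)$, so $G = P^{-1}P$.

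For the universal property, given a semigroup homomorphism $\phi : P \to H$ into a group $H$, set $\varphi([(a, b)]) := \phi(a)^{-1}\phi(b)$. Well-definedness follows directly from the relation $\sim$ and cancellation in $H$; multiplicativity is a short computation; and the extension is forced since $\varphi \circ \iota = \phi$ and $G = P^{-1}P$. Uniqueness of $G$ up to canonical isomorphism then follows formally from the universal property by the standard argument with mutually inverse maps.

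The main obstacle is the well-definedness of multiplication: independence both from the choice of $x, y$ realizing $xb = yc$ and from the representatives of the classes $[(a, b)]$ and $[(c, d)]$ requires building common refinements via iterated applications of the Ore condition, followed by cancellation. Once this is secured, the remaining group axioms (associativity, inverses) fall out by similar but shorter Ore-plus-cancellation arguments.
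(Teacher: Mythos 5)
Your construction is correct, and it is the standard one (formal left fractions $a^{-1}b$ on $P\times P$ modulo common left refinement); but note that the paper itself offers no proof of this theorem at all --- it quotes it as a classical result and cites Laca's Theorem~1.1.2, who in turn refers back to the literature (Clifford--Preston). So there is nothing to compare against except the classical argument, which is exactly what you outline. Your ``only if'' direction is the same computation the paper carries out explicitly in the special case of abelian monoids (writing $rs^{-1}=u^{-1}v$ to get $ur=vs\in Pr\cap Ps$), and your equivalence relation, product formula $[(a,b)][(c,d)]=[(xa,yd)]$ with $xb=yc$, and universal map $\varphi([(a,b)])=\phi(a)^{-1}\phi(b)$ are all right. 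Two points you defer deserve a word. First, the well-definedness of the product, which you correctly flag as the main obstacle, does go through as you describe: independence of the choice of $(x,y)$ uses one Ore application plus right cancellation, and independence of representatives uses two Ore applications plus right cancellation; since the paper's Ore semigroups are by definition cancellative monoids, both cancellation laws are available. Second, your uniqueness claim ``by the standard argument with mutually inverse maps'' presupposes that \emph{any} group $G'$ containing an embedded copy of $P$ with $G'=P^{-1}P$ satisfies the universal property, which is not given a priori; either verify that directly (if $a^{-1}b=c^{-1}d$ in $G'$, write $ca^{-1}=x^{-1}y$ to recover the relation $xc=ya$, $xd=yb$, so the forced formula is well defined), or argue more simply that the canonical extension $G\to G'$ of the inclusion is surjective because $G'=P^{-1}P$ and injective because $\varphi(a^{-1}b)=e$ forces $a=b$ in the embedded copy of $P$. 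With those two details filled in, the proof is complete.
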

\par In case $P$ is an Ore semigroup we say that the group $G$ in
\ref{thm:oredubreil} is the enveloping group of $P$.   
\par A key ingredient in our process of dilating the 
interaction groups under consideration is Laca's theorem
\cite[2.1.1]{ml}. For the convenience of the reader we recall it
below:    
\begin{thm}[M. Laca, \cite{ml}]\label{thm:ml}
Assume $P$ is an Ore semigroup with enveloping group $G=P^{-1}P$ and
let $\alpha$ be an action of $P$ by unital injective endomorphisms of
a unital $C^*$-algebra $A$. Then there exists a $C^*$-dynamical system 
$(B,G,\beta)$, unique up to isomorphism, consisting of an action
$\beta$ of $G$ by automorphisms of a $C^*$-algebra $B$ and an
embedding $i:A\to B$ such that: 
\begin{enumerate}
\item $\beta$ dilates $\alpha$, that is, $\beta_t\circ
  i=i\circ\alpha_t$, for $t$ in $P$, and
\item $(B,G,\beta)$ is minimal, that is, $\bigcup_{t\in
    P}\beta_t^{-1}(i(A))$ is dense in $B$. 
\end{enumerate}
\end{thm}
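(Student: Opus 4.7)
The plan is to construct $B$ as a $C^*$-inductive limit that formally adjoins inverses to the $\alpha_t$, and then promote the resulting semigroup of shifts to a group action by invoking the universal property of the enveloping group (Theorem~\ref{thm:oredubreil}). Concretely, for each $t\in P$ let $A_t:=A$; for $r,s\in P$ with $r\leq s$, so that $s=pr$ with $p\in P$ uniquely determined by left cancellativity, set $\phi_{r,s}:=\alpha_p:A_r\to A_s$. The semigroup law of $\alpha$ yields the cocycle condition $\phi_{s,u}\circ\phi_{r,s}=\phi_{r,u}$, so the $C^*$-inductive limit $B:=\varinjlim_{t\in P}A_t$ is well defined. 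Injectivity of every $\alpha_p$ forces the canonical maps $\iota_t:A\to B$ to be injective, and one has $\iota_r=\iota_{pr}\circ\alpha_p$ for all $r,p\in P$; in particular $i:=\iota_e$ satisfies $i=\iota_t\circ\alpha_t$ for every $t\in P$, which is the seed of the dilation identity.

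Next, for each $t\in P$ I would introduce an injective $*$-endomorphism $\sigma_t:B\to B$ determined on the dense subalgebra by $\sigma_t\iota_s(a):=\iota_{ts}(a)$; compatibility with the connecting maps makes this well defined, and a cofinality argument exploiting the Ore property and cancellativity shows that the shifted system $(A_{ts})_{s\in P}$ is cofinal in the original one, so $\sigma_t$ is surjective, hence an automorphism of $B$. A computation on generators gives $\sigma_t^{-1}\circ i=i\circ\alpha_t$, so $t\mapsto\sigma_t^{-1}$ is a semigroup homomorphism $P\to\aut(B)$ dilating $\alpha$; by Theorem~\ref{thm:oredubreil} it extends uniquely to a group homomorphism $\beta:G\to\aut(B)$ satisfying $\beta_t\circ i=i\circ\alpha_t$ on $P$.

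Minimality is then built in, since $\beta_t^{-1}(i(A))=\sigma_t(i(A))=\iota_t(A)$ and $\bigcup_{t\in P}\iota_t(A)$ is dense in $B$ by the very construction of the inductive limit. For uniqueness, a second dilation $(B',\beta',i')$ yields a compatible cone $(\beta'_t)^{-1}\circ i':A_t\to B'$ that factors through $B$ by the universal property of the $C^*$-inductive limit; the resulting morphism intertwines embeddings and semigroup shifts, hence the entire group actions, and is surjective by minimality of the target and isometric thanks to the injectivity of each $(\beta'_t)^{-1}\circ i'$ together with the isometry of the inductive-limit canonical maps.

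The main technical obstacle is the surjectivity of the shifts $\sigma_t$: this is precisely where the Ore property must be exploited in an essential way, because one needs to show that any element of the form $\iota_s(a)$ can, modulo the equivalence defining the direct limit, be represented by an element lying in the image of $\sigma_t$, that is, by moving deep enough into $tP$. Once this cofinality argument is in place, the remaining steps --extending from $P$ to $G$, checking the dilation identity, and proving uniqueness-- reduce to routine applications of the universal-property machinery.
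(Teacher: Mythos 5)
First, a remark on the comparison itself: the paper does not prove this statement. It is quoted verbatim from Laca \cite{ml} (Theorem 2.1.1 there), so the only benchmark is Laca's original argument, whose broad architecture your outline does follow: inductive limit over the directed set $(P,\leq)$ with connecting maps built from $\alpha$, shift automorphisms, extension from $P$ to $G$ via Theorem~\ref{thm:oredubreil}, and uniqueness via the universal property of the limit.

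There is, however, a concrete error that breaks the argument for noncommutative $P$: you shift on the wrong side. With the paper's ordering $r\leq s\iff s=pr$ and connecting maps $\phi_{r,s}=\alpha_p$, the defining relation in the limit is $\iota_{pr}\circ\alpha_p=\iota_r$. Your prescription $\sigma_t\iota_s(a):=\iota_{ts}(a)$ is then not well defined: applying it to both sides of the relation forces $\iota_{tpr}\circ\alpha_p=\iota_{tr}$, and the only relations available have the form $\iota_{q(tr)}\circ\alpha_q=\iota_{tr}$ with $q\in P$, so one would need $q=tpt^{-1}\in P$ and $\alpha_{tpt^{-1}}=\alpha_p$ --- a conjugation condition that fails outside the abelian case. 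The same defect resurfaces twice more: the range of your $\sigma_t$ is the closure of $\bigcup_s\iota_{ts}(A)$, indexed by $tP$, while the Ore condition $Pt\cap Pu\neq\emptyset$ yields cofinality of $Pt$, not of $tP$; and even granting well-definedness, one gets $\sigma_t\sigma_u=\sigma_{tu}$, so $t\mapsto\sigma_t^{-1}$ would be an anti-homomorphism rather than a homomorphism. All three problems disappear if you instead set $\sigma_t\iota_s(a):=\iota_{st}(a)$ (right translation): well-definedness is immediate from $\iota_{p(st)}\circ\alpha_p=\iota_{st}$; surjectivity follows because $Pt$ is cofinal in $(P,\leq)$ by the Ore condition, so the isometric image of $\sigma_t$ is dense and closed; $\sigma_t\sigma_u=\sigma_{ut}$, so $\beta_t:=\sigma_t^{-1}$ is a genuine semigroup homomorphism $P\to\aut(B)$ extending to $G$; and the computation $\sigma_t(\iota_e(\alpha_t(a)))=\iota_t(\alpha_t(a))=\iota_e(a)$ still yields $\beta_t\circ i=i\circ\alpha_t$. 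This right-handed shift is exactly what the paper uses implicitly in Section 3, where $q_r\hat{\beta}_t^{-1}=q_{rt}$. With that correction your minimality and uniqueness arguments go through essentially as you state them. A very minor further point: uniqueness of $p$ in $s=pr$ comes from right, not left, cancellativity.
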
 
\par Note that $i$ is unital:
\[\beta_{t^{-1}}i(a)i(1_A)=\beta_{t^{-1}}(i(a)\beta_t(i(1_A)))
=\beta_{t^{-1}}(i(a\alpha_t(1_A)))=i(a),\, \forall t\in P,\] so taking
adjoints and recalling that 
$\{\beta_{t^{-1}}(i(a)):\ t\in P,a\in A\}$ is dense in $B$, we see
that $i(1_A)=1_B$.\\ 
     
\par From now on $G$ will denote the enveloping group of the Ore
semigroup $P$. 

\begin{lem}\label{lem:restriction}
Let $\alpha$ be an action of the Ore semigroup $P$ by unital injective
endomorphisms of the unital $C^*$-algebra $A$, and suppose that
$V:G\to B(A)$ is an interaction group such that $V|_P=\alpha$. If 
$(i,(B,\beta,F))$ is an admissible dilation of $V$, then
$\beta_ti=iV_t=i\alpha_t$, $\forall t\in P$. 
\end{lem}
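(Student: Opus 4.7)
The plan is to exploit the admissibility condition $FF_t=F_tF$ in combination with the dilation identity $F\beta_si=iV_s$ for $s\in G$. Setting $s=e$ yields $Fi=iV_e=i$, so $i(A)\subseteq F(B)$; in particular $F$ acts as the identity on $i(A)$, which will simplify the admissibility relation once we evaluate it at $i(a)$.

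A preliminary observation is that $V_{t^{-1}}V_t=Id_A$ for $t\in P$. Indeed, the partial representation identity $V_{s^{-1}}V_sV_r=V_{s^{-1}}V_{sr}$, specialized to $s=t^{-1}$ and $r=t$, gives $V_tV_{t^{-1}}V_t=V_t$; since $V_t=\alpha_t$ is injective by hypothesis, this forces $V_{t^{-1}}V_t=Id_A$. In particular $V_{t^{-1}}\colon A\to A$ is surjective, because any $b\in A$ equals $V_{t^{-1}}(\alpha_t(b))$.

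I would then evaluate both sides of $F_tFi(a)=FF_ti(a)$ for arbitrary $a\in A$. Using $Fi(a)=i(a)$ and the dilation identity, the left side reduces to $F_ti(a)=\beta_tF\beta_{t^{-1}}i(a)=\beta_tiV_{t^{-1}}(a)$, and the right side reduces to $F\beta_tF\beta_{t^{-1}}i(a)=F\beta_tiV_{t^{-1}}(a)=iV_tV_{t^{-1}}(a)=i\alpha_tV_{t^{-1}}(a)$. Equating these and invoking the surjectivity of $V_{t^{-1}}$ yields $\beta_ti(b)=i\alpha_t(b)$ for every $b\in A$, as required.

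The main conceptual hurdle would be to recognize that admissibility, rather than a Schwarz-inequality argument for the conditional expectation $F$ (which would only yield $\beta_ti(a)-i\alpha_t(a)\in L_F$, and thus only suffice when $F$ is faithful), is exactly the tool that promotes the dilation identity $F\beta_ti=i\alpha_t$ to the stronger equality $\beta_ti=i\alpha_t$ on the Ore semigroup side. Once one sees that the commutator $F_tF-FF_t$ applied to $i(a)$ produces the desired identity modulo a surjective operator $V_{t^{-1}}$, the rest is a direct calculation.
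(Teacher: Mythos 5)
Your proof is correct and follows essentially the same route as the paper's: both use $Fi=i$, the identity $FF_ti=iV_tV_{t^{-1}}$, the injectivity of $\alpha_t$ to get $V_{t^{-1}}V_t=Id_A$, and admissibility applied to $i(A)$; your appeal to the surjectivity of $V_{t^{-1}}$ is exactly the paper's step of precomposing with $\alpha_t$.
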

\begin{proof}
Note first that for $t\in G$:
$FF_ti=F\beta_t(F\beta_{t^{-1}}i)=F\beta_tiV_{t^{-1}}=iV_tV_{t^{-1}}$.
If now $t\in P$ we have $V_{t^{-1}}\alpha_t=id_A$, and therefore
\[\beta_ti=\beta_tiV_{t^{-1}}\alpha_t
            =\beta_tF\beta_{t^{-1}}i\alpha_t
            =F_tFi\alpha_t
            =FF_ti\alpha_t
            =iV_tV_{t^{-1}}V_t=iV_t. \] 
\end{proof}
\begin{thm}\label{thm:main}
Let $\alpha$ be an action of the Ore semigroup $P$ by unital injective
endomorphisms of the unital $C^*$-algebra $A$, and suppose that
$V:G\to B(A)$ is an interaction group such that $V|_P=\alpha$. Then
$V$ has a minimal admissible dilation $(i,T)$, where $T=(B,\beta,F)$
and $i:A\to B$ is an embedding, which has the
following universal property. If $(i',(B',\beta',F'))$ is another
admissible dilation of $V$, then there exists a unique morphism
$\phi:(B,\beta,F)\to (B',\beta',F')$ such that $\phi i=i'$. Therefore
the dilation $(i,T)$ is unique up to isomorphism in the class of
minimal and admissible dilations.  
\end{thm}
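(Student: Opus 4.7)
The plan is to apply Laca's Theorem~\ref{thm:ml} to the Ore-semigroup action $\alpha$ to obtain a minimal $C^*$-dynamical system $(B,G,\beta)$ together with a unital embedding $i:A\to B$ satisfying $\beta_ti=i\alpha_t$ for $t\in P$ and with $\mathcal{B}_0:=\bigcup_{r\in P}\beta_{r^{-1}}i(A)$ dense in $B$. The heart of the matter is then to build the conditional expectation $F:B\to i(A)$ as an inductive limit of the transfer operators $V_{r^{-1}}$, $r\in P$, and to verify the interaction-group relation $iV_t=F\beta_ti$ together with admissibility of $(B,\beta,F)$.

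First I record the algebraic identities that make the construction go through. The partial-representation axioms give the standard identity $V_tV_{t^{-1}}V_t=V_t$ for all $t\in G$; since $V_t=\alpha_t$ is injective for $t\in P$, this forces $V_{t^{-1}}V_t=\mathrm{id}_A$ on $P$. Applying the axiom $V_sV_tV_{t^{-1}}=V_{st}V_{t^{-1}}$ with $(s,t)\mapsto(r^{-1},s)$, composing on the right with $V_s$, and using $V_{s^{-1}}V_s=\mathrm{id}_A$, yields the key identity
\[
V_{r^{-1}s}=V_{r^{-1}}V_s,\qquad r,s\in P.
\]
For each $r\in P$ I define $F^{(r)}:\beta_{r^{-1}}i(A)\to i(A)$ by $F^{(r)}(\beta_{r^{-1}}i(a)):=i(V_{r^{-1}}(a))$; this is a positive unital contraction because $\beta_{r^{-1}}i$ is a $*$-isomorphism onto its image and $V_{r^{-1}}$ is positive unital. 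For $r\le s$ in $P$ (so $s=ur$ for some $u\in P$) one has $\beta_{r^{-1}}i(a)=\beta_{s^{-1}}i(\alpha_u(a))$, and the coherence $F^{(s)}(\beta_{r^{-1}}i(a))=F^{(r)}(\beta_{r^{-1}}i(a))$ amounts to $V_{s^{-1}}V_u=V_{r^{-1}}$, which follows by another application of $V_sV_tV_{t^{-1}}=V_{st}V_{t^{-1}}$ to $V_{r^{-1}u^{-1}}V_u$ combined with $V_{u^{-1}}V_u=\mathrm{id}_A$. Because $P$ is Ore-directed, $\mathcal{B}_0$ is a $*$-subalgebra of $B$ on which the $F^{(r)}$ assemble to a contractive unital idempotent onto $i(A)$; by density and Tomiyama's theorem this extends to a conditional expectation $F:B\to B$. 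The dilation identity $iV_t=F\beta_ti$ then follows immediately: writing $t=r^{-1}s$ with $r,s\in P$, $F\beta_ti(a)=F\beta_{r^{-1}}i(\alpha_s(a))=i(V_{r^{-1}}V_s(a))=iV_t(a)$ by (the extension to $G$ of) the key identity.

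It remains to verify admissibility, minimality, and the universal property. Admissibility has two parts: $F\beta_tF(1)=F(1)$ is automatic because $i$ is unital, giving $F(1_B)=1_B$; and $F_rF_s=F_sF_r$ must be checked. On elements $b=\beta_vi(a)$ ($v\in G$, $a\in A$) of a dense spanning set, iterating $F\beta_ui=iV_u$ yields $F_sF_r(b)=\beta_siV_{s^{-1}r}V_{r^{-1}v}(a)$ and, by symmetry, $F_rF_s(b)=\beta_riV_{r^{-1}s}V_{s^{-1}v}(a)$, so commutativity reduces to an identity inside $A$ that I verify from the partial-representation axioms together with the commutativity $E_rE_s=E_sE_r$ of the projections $E_t=V_tV_{t^{-1}}$ furnished by Proposition~\ref{prop:pa}; this verification is the main technical obstacle. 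Minimality is built into Laca's construction. For the universal property, given another admissible dilation $(i',(B',\beta',F'))$, Lemma~\ref{lem:restriction} supplies $\beta'_ti'=i'\alpha_t$ for $t\in P$, so Laca's uniqueness yields a unique unital equivariant $*$-homomorphism $\phi:B\to B'$ with $\phi i=i'$; repeating the dilation-identity computation inside $B'$ shows $F'\phi=\phi F$ on $\mathcal{B}_0$ and hence on $B$. Uniqueness of the minimal admissible dilation up to isomorphism then follows by the usual abstract-nonsense argument from this universal property.
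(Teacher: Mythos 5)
Your overall strategy coincides with the paper's: invoke Laca's theorem, assemble the conditional expectation $F$ as a direct limit of the transfer operators $V_{r^{-1}}$ over the directed set $P$ (your coherence check $V_{s^{-1}}V_u=V_{r^{-1}}$ for $s=ur$ is exactly the paper's), verify $F\beta_t i=iV_t$ by writing $t=r^{-1}s$, and get the universal property from Laca's uniqueness plus a density argument. All of that is correct and matches the paper.

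There is, however, a genuine gap at the one place you defer: the verification that $F_rF_s=F_sF_r$ for all $r,s\in G$. You reduce it to the identity $\beta_s iV_{s^{-1}r}V_{r^{-1}v}(a)=\beta_r iV_{r^{-1}s}V_{s^{-1}v}(a)$ and assert that this is ``an identity inside $A$'' checkable from the partial-representation axioms and $E_rE_s=E_sE_r$. It is not an identity inside $A$: for general $r,s\in G$ the maps $\beta_r i$ and $\beta_s i$ do not take values in $i(A)$, and after applying $\beta_{s^{-1}}$ you are left needing to know that $\beta_{s^{-1}r}i$ agrees with $iV_{s^{-1}r}$ on the subalgebra $A_{r^{-1}s}$. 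But that statement is equivalent to $F_{s^{-1}r}i=FF_{s^{-1}r}i$ (compare Proposition~\ref{prop:env}), i.e.\ it is essentially the admissibility you are trying to prove, so the reduction as you set it up is circular. The paper avoids this by a two-step structure you have not reproduced: it first proves only the weaker commutation $FF_t=F_tF$ for all $t\in G$, tested on the vectors $\beta_{r^{-1}}i(a)$ with $r\in P$ (not general $v\in G$); in that computation the problematic $\beta_t$, written as $\beta_{u^{-1}}\beta_v$ with $u,v\in P$, is tamed because $\beta_v i=iV_v$ and $\beta_{u^{-1}}(iV_u(\cdot))=\beta_{u^{-1}}\beta_u i(\cdot)=i(\cdot)$, and the identity collapses to $E_tV_{r^{-1}}$ on both sides using $E_uE_v=E_vE_u$. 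Only then does Proposition~\ref{prop:1}(1) upgrade $FF_t=F_tF$ (all $t$) to the full commutativity $F_rF_s=F_sF_r$ (all $r,s$). You should either adopt this reduction or supply an argument that breaks the circularity in yours; as written, the ``main technical obstacle'' you name is precisely the unproved core of the theorem.
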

\begin{proof}
Let $i:(A,\alpha)\to (B,\beta)$ be the minimal dilation of
$(A,\alpha)$ provided by Laca's theorem. We suppose, as we can do,
that $i$ is the natural inclusion, so $A\subseteq B$. We proceed next
to define a conditional expectation $F:B\to A$. To this end note   
first that if $r,s\in P$, with $r\leq s$, and $a_r,a_s\in A$ are such
that $\beta_{r^{-1}}(a_r)=\beta_{s^{-1}}(a_s)$, then
$\beta_{sr^{-1}}(a_r)=a_s$, so $\alpha_{sr^{-1}}(a_r)=a_s$ by
\ref{lem:restriction}. Therefore 
\[\mathcal{L}_s(a_s)
=\mathcal{L}_s\alpha_{sr^{-1}}(a_r)
=V_{s^{-1}}V_{sr^{-1}}(a_r)
=\mathcal{L}_s\alpha_s\mathcal{L}_r(a_r)
=\mathcal{L}_r(a_r).\] 
Thus we may define $F_0:\bigcup_{t\in P}\beta_{t^{-1}}(A)\to B$ such
that $F_0(b)=\mathcal{L}_{t}(\beta_t(b))$, $\forall b\in
\beta_{t^{-1}}(A)$. Since
$\norm{F_0(b)}=\norm{\mathcal{L}_{t}(\beta_t(b))}\leq\norm{b}$, $F_0$
extends uniquely to a bounded operator $F:B\to A$, which is easily
seen to be positive and to satisfy $F^2=F$ and $F(B)=A$. Then $F$ is a
conditional expectation with range $A$. We claim that $(B,\beta,F)$ is
a minimal admissible dilation of $V$. In fact, if $t\in G$
and $r,s\in P$ are such that $t=r^{-1}s$, then
\[F\beta_t|_A
=F\beta_{r^{-1}}\beta_{rt}|_A
=F\beta_{r^{-1}}\alpha_s
=\mathcal{L}_r\alpha_s
=V_{r^{-1}}V_rV_{r^{-1}s}
=V_{r^{-1}s}
=V_t.\] 
Since $\bigcup_{t\in P}\beta_{t^{-1}}(A)$ is dense in $B$ we have that
$(B,\beta,F)$ is minimal, and to see that it is also admissible, it is
enough to check that $FF_t\beta_{r^{-1}}|_A=F_tF\beta_{r^{-1}}|_A$,
$\forall t\in G, r\in P$. On the one hand we have 
\begin{equation}\label{eqn:fft}
FF_t\beta_{r^{-1}}|_A
=F\beta_tF\beta_{t^{-1}r^{-1}}|_A
=V_tFV_{t^{-1}r^{-1}}
=E_tV_{r^{-1}}
\end{equation}
On the other hand, let $t\in G$, $t=u^{-1}v$, $u,v\in
P$. Using Lemma~\ref{lem:restriction} and recalling that
$E_uE_v=E_vE_u$, we have  
\begin{multline}\label{eqn:ftf}
\begin{split}
F_tF\beta_{r^{-1}}|_A
=\beta_{u^{-1}}\beta_vF\beta_{v^{-1}u}V_{r^{-1}}
=\beta_{u^{-1}}V_vV_{v^{-1}u}V_{r^{-1}}\\
=\beta_{u^{-1}}E_vE_uV_{u}V_{r^{-1}}
=\beta_{u^{-1}}E_uE_vV_{u}V_{r^{-1}}\\
=\beta_{u^{-1}}V_uV_{u^{-1}}V_vV_{v^{-1}}V_{u}V_{r^{-1}}\\
=\beta_{u^{-1}}\beta_uV_{t}V_{t^{-1}}V_{r^{-1}}
=E_tV_{r^{-1}}
\end{split}
\end{multline}
From (\ref{eqn:fft}) and (\ref{eqn:ftf}) we conclude that
$(B,\beta,F)$ is admissible. 
We see next that  $(B,\beta,F)$ has the claimed universal
property. Then suppose that $(i',(B',\beta',F'))$ is another admissible 
dilation of $V$. By Lemma~\ref{lem:restriction} we have that
$\beta'|_P=i'\alpha$, and then by the universal property of the pair
$(B,\beta)$ there exists a unique homomorphism $\phi:B\to B'$ such
that $\phi i=i'$ and $\beta_t'\phi=\phi\beta_t$ $\forall t\in G$. In
particular $\phi\beta_{r^{-1}}i=\beta_{r^{-1}}'\phi
i=\beta_{r^{-1}}'i'$, $\forall r\in P$. Thus
\[F'\phi\beta_{r^{-1}}i=F'\beta_{r^{-1}}'i'=i'V_{r^{-1}}=
\phi iV_{r^{-1}}=\phi F\beta_{r^{-1}}i,\ \  \forall r\in P.\] The 
equality $\phi F=F'\phi$ follows now from the density of  
$\bigcup_{r\in P}\beta_{r^{-1}}i(A)$ in $B$ and the continuity of the
involved maps.  
\end{proof}
\begin{rem} Suppose $V$ and $V'$ are interaction groups that extend actions by
injective unital endomorphisms of the Ore semigroup $P$. Suppose as
well that $\psi:(A,V)\to (A',V')$ is a morphism of 
interaction groups, and let $(i,T)$ and $(i',T')$ be the corresponding
minimal admissible dilations of $V$ and $V'$. Then $(i'\psi,T')$ is an
admissible dilation of $V$, so there exists a unique morphism
$\phi:T\to T'$ such that $\phi i=i'\psi$. In this way we obtain a
functor from the category of interaction groups that extend actions by
injective unital endomorphisms of the Ore semigroup $P$ into the
category $\mathcal{D}_G$, where $G$ is the enveloping group of~$P$.
\end{rem}   
\par We end the section with a result concerning enveloping
actions.   
\begin{prop}\label{prop:env2} 
Let $V$ be an interaction group like in \ref{thm:main}, and let
$\gamma$ be the partial action associated to $V$ via \ref{prop:pa}.  
Then $\gamma$ has an enveloping action, which is unique up to
isomorphism.  
\end{prop}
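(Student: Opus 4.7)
The plan is to deduce both halves of the statement from tools already at hand: Theorem~\ref{thm:main}, Corollary~\ref{cor:enveloping}, and Laca's Theorem~\ref{thm:ml}.

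For existence, I would simply apply Theorem~\ref{thm:main} to $V$ to obtain a minimal admissible dilation $(i,(B,\beta,F))$ with $i:A\to B$ an embedding. By Corollary~\ref{cor:enveloping}, minimality of the dilation implies that $\beta$ is an enveloping action of the partial action $\gamma$ associated to $V$ via Proposition~\ref{prop:pa}.

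For uniqueness, I would compare any two enveloping actions of $\gamma$ by showing that each of them is a Laca dilation of $\alpha$, and then invoke the uniqueness clause of Theorem~\ref{thm:ml}. The first key observation is that for $t\in P$ the identity $V_{t^{-1}}\alpha_t=\mathrm{id}_A$ forces $A_{t^{-1}}:=V_{t^{-1}}(A)=A$, so $\gamma_t=V_t|_A=\alpha_t$ is defined on all of $A$. Hence if $(B',\beta')$ is an enveloping action of $\gamma$ with embedding $j:A\to B'$, then $\beta'_t\circ j=j\circ\alpha_t$ for all $t\in P$, so $\beta'|_P$ is a dilation of $\alpha$ in Laca's sense. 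Second, since every $t\in G$ factors as $r^{-1}s$ with $r,s\in P$, we have $\beta'_t(j(A))=\beta'_{r^{-1}}(j(\alpha_s(A)))\subseteq \beta'_{r^{-1}}(j(A))$, whence
\[\bigcup_{t\in G}\beta'_t(j(A))=\bigcup_{r\in P}\beta'_{r^{-1}}(j(A))\]
is a directed union of $C^*$-subalgebras. Therefore the $C^*$-algebra generated by the left-hand side coincides with the norm closure of the right-hand side, and the enveloping minimality of $(B',\beta')$ is literally Laca's minimality condition.

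Applying the uniqueness part of Theorem~\ref{thm:ml} to $(B,\beta,i)$ and $(B',\beta',j)$ then yields a $G$-equivariant $*$-isomorphism $\phi:B\to B'$ with $\phi\circ i=j$, which is precisely an isomorphism of enveloping actions of $\gamma$. The only non-routine points are identifying $\gamma_t$ with $\alpha_t$ on all of $A$ for $t\in P$ and verifying that the enveloping and Laca notions of minimality coincide; both reduce to the hypothesis $V|_P=\alpha$ together with the Ore property of $P$.
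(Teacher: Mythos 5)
Your proposal is correct and follows essentially the same route as the paper: existence via Theorem~\ref{thm:main} together with Corollary~\ref{cor:enveloping}, and uniqueness by checking that any enveloping action of $\gamma$ satisfies the two conditions of Laca's Theorem~\ref{thm:ml}, with the same directed-union argument (using $\beta'_{r^{-1}}(A)\subseteq\beta'_{s^{-1}}(A)$ for $r\leq s$ in $P$) to match the enveloping notion of minimality with Laca's. Your extra remark that $V_{t^{-1}}\alpha_t=\mathrm{id}_A$ forces $A_{t^{-1}}=A$ for $t\in P$ is exactly the justification the paper leaves implicit in the phrase ``it is clear that $\beta'$ satisfies the first property.''
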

\begin{proof}
It follows from \ref{thm:main} and \ref{cor:enveloping} that the
action $\beta$ provided by Theorem~\ref{thm:main} is an enveloping
action for $\gamma$. Suppose now that $\beta':G\times B'\to B'$ is
another enveloping action for $\gamma$, where $B'$ is a $C^*$-algebra
which contains $A$. To show that $\beta$ and $\beta'$ are isomorphic,
it is enough to show that $\beta'$ satisfies properties 1. and 2. of
Theorem~\ref{thm:ml}. It is clear that $\beta'$ satisfies the first
property, so let us see that it also verifies the second one. Note
that if $t=r^{-1}s\in G$, with $r,s\in P$, then 
$\beta'_t(A)=\beta'_{r^{-1}}\alpha_s(A)\subseteq
\beta'_{r^{-1}}(A)$. On the other hand, suppose $r,s\in P$, with 
$r\leq s$. Then, since $sr^{-1}\in P$, we have
$A\supseteq\alpha_{sr^{-1}}(A)=\beta'_s\beta'_{r^{-1}}(A)$, so
$\beta'_{s^{-1}}(A)\supseteq \beta'_{r^{-1}}(A)$. Thus
$\beta'_t(A)\subseteq \bigcup_{r\in P}\beta'_{r^{-1}}(A)$, $\forall
t\in G$, because $P$ is directed by its partial order. This implies
that $B'$ is the closure of $\bigcup_{r\in P}\beta'_{r^{-1}}(A)$, as
we wanted to prove.    
\end{proof}
  
\section{Dilations of Exel-Renault interaction groups} 
\par In this section we specialize to certain interaction groups
occuring on commutative $C^*$-algebras. More precisely, we are
interested in the interaction groups studied in \cite{exren}. In that
work, the authors considered right actions  
$\theta:P\times X\to X$, where $P$ is an Ore semigroup with enveloping
group $G$, and $\theta_t$ is an onto local homeomorphism of the
compact Hausdorff space $X$, that is, $\theta_t:X\to X$ is a covering
map. Dualizing, $\theta$ induces a left action  
$\alpha$ of $P$ by injective unital endomorphisms of $A=C(X)$. It is
shown in \cite{exren} that for $\alpha$ to be extended to an
interaction group $V:G\to B(A)$ it is enough that there exists a
certain map $\omega:P\times X\to [0,1]$, associated to $\theta$. This 
map is called a cocycle and is determined by the fact that
$E_t(a)(x)=\sum_{\theta_t(y)=\theta_t(x)}\omega(t,y)a(y)$, $\forall
t\in P$, $a\in A$ and $x\in X$, where $E_t=V_tV_{t^{-1}}$. In this
case Theorem~\ref{thm:main} can be applied, so one concludes that the 
interaction groups considered by Exel and Renault in \cite{exren} have
minimal admissible dilations. We mention in passing that for these
interaction groups Theorem~\ref{thm:main} could be proved by
using exclusively measure-theoretic arguments, but we will not do it
here. The aim of this section is to show that the minimal admissible
dilations of the Exel-Renault interaction groups are also faithful.

\subsection{Conditional expectations on commutative 
  $C^*$-algebras.} 
\par We begin by giving a characterization of conditional expectations   
from a commutative unital $C^*$-algebra onto a unital
$C^*$-subalgebra, suitable for our purposes. We also describe the 
transfer operators for an endomorphism induced by a covering map. For
more information about conditional expectations we 
refer the reader to \cite{w} and \cite{blanchard}. 
\par We fix a notation we will use until the end of the present
paragraph. Let $B=C(Z)$ be a unital $C^*$-algebra and $A=C(X)$ a
unital $C^*$-subalgebra of~$B$. Note that $X$ is  
homeomorphic to the quotient space of $Z$ with respect to the relation
$z\sim z'\iff a(z)=a(z')$, $\forall a\in A$. Let $\pi:Z\to X$ be the
corresponding quotient map. Observe that an element $a\in C(X)$, when
seen as an element of~$B$, sends $z\in Z$ into $a(\pi(z))$. Denote by
$P(Z)$ the set of regular Borel probability measures on~$Z$.   

\begin{prop}\label{prop:positivemaps}
With the above notation, let $F:B\to A$ be a unital linear map. Then
$F$ is positive if and only if there exists a map $\mu:X\to
P(Z)$ that is $w^*$-continuous and such
that 
\begin{equation}\label{eqn:positive}
F(b)(x)=\int_Zb(z)d\mu_x(z),\ \forall b\in B, x\in X.  
\end{equation}
Equation \eqref{eqn:positive} establishes a bijective correspondence 
between unital positive linear maps $F:B\to A$ and $w^*$-continuous
maps $\mu:X\to P(Z)$.   
\end{prop}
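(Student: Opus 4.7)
The plan is to apply the Riesz--Markov representation theorem parametrised by $X$, together with the observation that $w^*$-continuity of a family of probability measures on $Z$ is dual to continuity of the resulting scalar functions on $X$.

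For the necessity, assume $F:B\to A$ is unital, positive, and linear. For each $x\in X$, consider the functional $\phi_x:B\to\C$ defined by $\phi_x(b):=F(b)(x)$, which is the composition of $F$ with evaluation at $x$; as $\mathrm{ev}_x:A\to\C$ is unital and positive, so is $\phi_x$. By Riesz--Markov, applied on the compact Hausdorff space $Z$, there is a unique $\mu_x\in P(Z)$ with $\phi_x(b)=\int_Z b\,d\mu_x$ for every $b\in B$, which gives the desired integral representation. To verify $w^*$-continuity of $x\mapsto\mu_x$, note that by definition this means that for each fixed $b\in B$ the scalar function $x\mapsto\int_Z b\,d\mu_x$ is continuous on $X$; but this function is simply $F(b)$, which lies in $A=C(X)$ by hypothesis, so continuity is automatic.

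For the sufficiency, given a $w^*$-continuous $\mu:X\to P(Z)$, define $F(b)(x):=\int_Z b\,d\mu_x$. Linearity of $F$ in $b$ is immediate, positivity of each $\mu_x$ gives positivity of $F$, and $\mu_x(Z)=1$ gives $F(1_B)=1_A$. The $w^*$-continuity hypothesis on $\mu$ says exactly that $x\mapsto F(b)(x)$ is continuous for every $b\in B$, so $F(b)$ indeed lies in $A=C(X)$ and not merely in the bounded functions on $X$.

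Bijectivity of the correspondence is then formal: starting from a positive $F$, constructing $\mu$ via Riesz--Markov and reassembling $F$ from the integral formula returns $F$ by construction; starting from $\mu$, forming $F$ and then extracting the measures $\mu_x$ from the functionals $F(\cdot)(x)$ returns the original $\mu_x$ by uniqueness in Riesz--Markov. No genuine obstacle arises; the one point to track carefully is that $w^*$-continuity of the measure-valued map matches precisely the requirement that $F$ land in $C(X)$ rather than in $\ell^\infty(X)$, which is what makes the correspondence with $A$ (as opposed to some larger space) well posed.
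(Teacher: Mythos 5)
Your proof is correct and follows essentially the same route as the paper: compose $F$ with evaluation at each $x\in X$, apply Riesz--Markov to obtain $\mu_x\in P(Z)$, and observe that $w^*$-continuity of $x\mapsto\mu_x$ is exactly the statement that each $F(b)$ is continuous on $X$. Your remarks on the bijectivity and on why the image lands in $C(X)$ rather than merely in bounded functions are slightly more explicit than the paper's, but the argument is the same.
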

\begin{proof}
Let $\epsilon_x:A\to \C$ be evaluation in $x\in X$. Then if $F$ is
positive $\epsilon_x\circ F$ is a state of $B$. Let $\mu_x$ be the
probability measure provided by the Riesz-Markov representation
theorem, such that $\epsilon_x\circ F(b)=\int_Zb(z)d\mu_x(z)$, $\forall b\in
B$. Since $F(b)$ is a continuous function defined on $X$, it follows
that $x\mapsto\mu_x$ is $w^*$-continuous. Conversely, it is clear that
if a $w^*$-continuous map $\mu:X\to P(Z)$ is such that 
$F(b)(x)=\int_Zb(z)d\mu_x(z),\ \forall b\in B, x\in X$, then $F(b)$ is
positive whenever $b$ is positive. Finally, it is obvious that the
correspondence $F\mapsto \mu$ is one to one and onto.     
\end{proof}

\par It is clear that in Proposition~\ref{prop:positivemaps} above $A$
does not need to be a subalgebra of $B$. 
\begin{exmp}\label{exmp:endo}
Suppose $\alpha:B\to B$ is an injective unital endomorphism and let
$A:=\alpha(B)$. Then there exists a homeomorphism 
$\bar{\xi}:X\to Z$ such that $\alpha(b)=b\circ\bar{\xi}\in
A$. Therefore
$\alpha(b)(x)=b(\bar{\xi}(x))=\int_Zb(z)d\delta_{\bar{\xi}(x)}$, where 
$\delta_z$ denotes the Dirac measure concentrated at $z$. Thus the map 
$\mu$ provided by \ref{prop:positivemaps} for $\alpha$ is given by: 
$\mu_x=\delta_{\bar{\xi}(x)}$.      
\end{exmp}

\begin{prop}\label{prop:charcondexp}
A linear map $F:B\to A$ is an onto conditional expectation
if and only if there exists a map $\mu:X\to P(Z)$ such that $\mu$ is
$w^*$-continuous, $\textrm{supp}(\mu_x)\subseteq\pi^{-1}(x)$, $\forall
x\in X$, and \[F(b)(x)=\int b(z)d\mu_x(z), \forall b\in B, x\in X.\]
If there exists such a map $\mu$, then it is unique, and $F$ is
faithful if and only if the interior of the set $Z_\mu:=\{z\in Z:
z\notin\textrm{supp}(\mu_{\pi(z)})\}$ is empty.
 Consequently, if $\textrm{supp}(\mu_x)=\pi^{-1}(x)$, $\forall x\in
 X$, then $F$ is faithful.     
\end{prop}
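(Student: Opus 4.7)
The plan is to reduce the statement to Proposition~\ref{prop:positivemaps} by identifying which unital positive maps $F:B\to A$ happen to be conditional expectations, and then to analyze faithfulness directly via the measure $\mu$.

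First I would note that any conditional expectation $F:B\to A$ is unital: since $F^2=F$ with range $A$, every $a\in A$ is of the form $F(b)$, whence $F(a)=F^2(b)=F(b)=a$; as $1_B\in A$, this gives $F(1_B)=1_A$. Thus $F$ is a unital positive linear map and by Proposition~\ref{prop:positivemaps} is represented by a unique $w^*$-continuous $\mu:X\to P(Z)$. The extra defining property $F|_A=\mathrm{id}_A$ reads $\int a\circ\pi\,d\mu_x=a(x)$ for every $a\in C(X)$, which, since $C(X)$ separates points of $X$, amounts to the pushforward $\pi_*\mu_x$ being $\delta_x$, i.e.\ $\mu_x(\pi^{-1}(x))=1$. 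Because $\pi^{-1}(x)$ is closed, this is equivalent to $\mathrm{supp}(\mu_x)\subseteq\pi^{-1}(x)$.

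Conversely, given a $w^*$-continuous $\mu$ with supports in the fibers, the formula $F(b)(x):=\int b\,d\mu_x$ defines by Proposition~\ref{prop:positivemaps} a unital positive linear map $B\to A$ satisfying $F|_A=\mathrm{id}_A$; hence $F$ is idempotent onto $A$ and of norm one, so it is a conditional expectation. Uniqueness of $\mu$ is inherited from Proposition~\ref{prop:positivemaps}.

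For faithfulness I would compute $F(b^*b)(x)=\int |b|^2\,d\mu_x$. So $F(b^*b)=0$ iff $b$ vanishes $\mu_x$-a.e.\ for every $x$; by continuity of $b$ and the definition of $\mathrm{supp}$ as the smallest closed set of full measure, this is equivalent to $b$ vanishing on $S:=\bigcup_{x\in X}\mathrm{supp}(\mu_x)$. Since each $\mathrm{supp}(\mu_x)\subseteq\pi^{-1}(x)$, one checks directly that $S=Z\setminus Z_\mu$. Consequently $F$ is faithful iff no nonzero element of $C(Z)$ vanishes on $S$, iff $S$ is dense in $Z$, iff $Z_\mu$ has empty interior. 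The final assertion is immediate: if $\mathrm{supp}(\mu_x)=\pi^{-1}(x)$ for every $x$, then any $z\in Z$ lies in $\mathrm{supp}(\mu_{\pi(z)})$, so $Z_\mu=\emptyset$. The main obstacle---modest---is the careful passage from the pointwise condition $\mathrm{supp}(\mu_x)\subseteq\pi^{-1}(x)$ to the measure-theoretic statement that $\mu_x$ is concentrated on the fiber, and the equivalence between density of $S$ and the absence of nonzero continuous functions vanishing on $S$; both use only that fibers and supports are closed and that $Z$ admits enough continuous functions by Urysohn's lemma.
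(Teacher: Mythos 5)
Your proof is correct and follows essentially the same route as the paper: both reduce the characterization to Proposition~\ref{prop:positivemaps} and then translate the conditional-expectation and faithfulness conditions into statements about the supports of the measures $\mu_x$. The only (harmless) variation is that you derive $\supp(\mu_x)\subseteq\pi^{-1}(x)$ from the pushforward identity $\pi_*\mu_x=\delta_x$ (via uniqueness in the Riesz representation theorem), whereas the paper separates a point $z\notin\pi^{-1}(x)$ from $x$ by a Urysohn function in $A$; likewise your faithfulness argument is the paper's, repackaged as density of $\bigcup_{x\in X}\supp(\mu_x)$ in $Z$.
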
  
\begin{proof}
Suppose first that there exists such a map $\mu$. If $a\in A,b\in B$
and $x\in X$:
\begin{equation*}\begin{split}
  F(ab)(x)&=\int_{\pi^{-1}(x)}a(\pi(z))b(z)d\mu_x(z)
=a(x)\int_{\pi^{-1}(x)}b(z)d\mu_x(z)\\&=(aF(b))(x).\end{split}\end{equation*} 
Then $F(ab)=aF(b)$. A similar computation shows that $F(a)=a$, $\forall
a\in A$, whence $F$ is a conditional expectation. Conversely, suppose
that $F:B\to A$ is a conditional expectation, and let $\mu:X\to P(Z)$
be the map provided by Proposition~\ref{prop:positivemaps} for the
unital positive map $F$. Let us see 
that $\textrm{supp}(\mu_x)\subseteq\pi^{-1}(x)$. Suppose
$z\notin\pi^{-1}(x)$. Then $\pi(z)\neq x$, so there exist open
disjoint sets $V_z$ and $V_x$ in $X$ such that $\pi(z)\in V_z$ and
$x\in V_x$. Let $a\in A$ be such that $a(X)=[0,1]$, with
$\textrm{supp}(a)\subseteq V_x$ and $a(x)=1$. Then, since $a=F(a)$, 
$a(x)=1$, and $\textrm{supp}(a\circ\pi)\subseteq\pi^{-1}(V_x)$:  
\[1=F(a)(x)
=\int_Za\circ\pi\, d\mu_x
=\int_{\pi^{-1}(V_x)}\hspace*{-2ex}
a\circ\pi\, d\mu_x
\leq\mu_x(\pi^{-1}(V_x))\leq 1. \]    
It follows that $\mu_x(Z\setminus\pi^{-1}(V_x))=0$. 
Then $\pi^{-1}(V_z)\cap\pi^{-1}(V_x)=\emptyset$, hence
$\mu_x(\pi^{-1}(V_z))=0$. Since $\pi^{-1}(V_z)$ is open we have that
$\pi^{-1}(V_z)\cap\textrm{supp}(\mu_x)=\emptyset$. This shows that
$z\notin \textrm{supp}(\mu_x)$ and therefore
$\textrm{supp}(\mu_x)\subseteq\pi^{-1}(x)$.    
\par Suppose now that there exists a non--empty open subset $V$ of $Z$
such that $z\notin\textrm{supp}(\mu_{\pi(z)})$, $\forall z\in V$. Let
$b\in B^+$ be a non--zero element such that
$\textrm{supp}(b)\subseteq~V$. Then for all $x\in X$ we have 
$F(b)(x)=\int_{\pi^{-1}(x)\cap 
  V}b(z)d\mu_x(z)=0$ since $\pi^{-1}(x)\cap
V\cap\textrm{supp}(\mu_{x})=\emptyset$. Thus $F$ is not faithful.  
Conversely, if $F$ is not faithful, let $0\neq b\in B^+\cap\ker F$, and
$V\subseteq \textrm{supp}(b)$ such that $b(z)\geq\delta$, for some
positive $\delta$ and for all $z\in V$. Then, if $z_0\in V$ and
$x=\pi(z_0)$ we have 
\[0=F(b)(x)=\int_{\pi^{-1}(x)}\hspace*{-2ex}b\,d\mu_{x}\geq
\delta\mu_{x}(\pi^{-1}(x)\cap\textrm{supp}(b))\geq
\delta\mu_{x}(V\cap\pi^{-1}(x)).\] This shows that
$z_0\notin\textrm{supp}(\mu_{\pi(z)})$, $\forall z_0\in V$.  
\end{proof}
\begin{cor}\label{cor:transfer}
Let $\xi:Z\to Z$ be an onto continuous map and $\alpha:B\to B$
its dual map. Let $A=C(X)$ be the range of $\alpha$ and $\pi:Z\to X$ 
the canonical projection. Then a map $\mathcal{L}:B\to B$ is a 
transfer operator for $\alpha$ if and only if there exists a 
$w^*$-continuous map $\nu:Z\to P(Z)$ such that 
\begin{equation}\label{eq:transfer}
\mathcal{L}(b)(z)=\int_{\xi^{-1}(z)}b(u)d\nu_z(u)
\end{equation} 
with $\supp(\nu_z)\subseteq\xi^{-1}(z)$, $\forall z\in Z$. In this
case the map $\nu$ is unique. More
precisely, if $\mathcal{L}$ is a transfer operator, then
$\nu_z=\mu_{\pi(z')}$, where $\mu$ is the map associated by 
\ref{prop:charcondexp} to the conditional expectation
$\alpha\mathcal{L}$, and $z'$ is any element of~$\xi^{-1}(z)$.   
\end{cor}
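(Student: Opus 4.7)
The plan is to reduce the corollary to Proposition~\ref{prop:charcondexp} via the observation that, for any unital transfer operator $\mathcal{L}$ for $\alpha$, the composition $F := \alpha\mathcal{L}$ is a conditional expectation from $B$ onto $A$. First I would verify this: positivity and unitality of $F$ are immediate, and the transfer identity $\mathcal{L}(\alpha(a)b) = a\mathcal{L}(b)$ together with $\mathcal{L}(1)=1$ gives $F^2(b) = \alpha(\mathcal{L}(b)\mathcal{L}(1)) = F(b)$ and $F(\alpha(b)) = \alpha(b\mathcal{L}(1)) = \alpha(b)$, so $F$ fixes $A = \alpha(B)$ pointwise and has range $A$.

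Applying Proposition~\ref{prop:charcondexp} to $F$ then produces a unique $w^*$-continuous $\mu: X \to P(Z)$ with $\supp(\mu_x) \subseteq \pi^{-1}(x)$ and $\alpha\mathcal{L}(b)(z') = \int b\, d\mu_{\pi(z')}$ for all $b \in B$ and $z' \in Z$. Since $\alpha(g)(z') = g(\xi(z'))$, the left side equals $\mathcal{L}(b)(\xi(z'))$, so my next step is to descend $\mu$ to a map $\nu$ indexed by $Z$ rather than by $X$. The key observation is that the equivalence relation on $Z$ defining $X$ coincides with the fibers of $\xi$: indeed, $a(z_1) = a(z_2)$ for every $a \in A = \alpha(B)$ means $g(\xi(z_1)) = g(\xi(z_2))$ for every $g \in B$, which forces $\xi(z_1) = \xi(z_2)$. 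Hence $\pi(z')$ depends only on $\xi(z')$, and setting $\nu_z := \mu_{\pi(z')}$ for an arbitrary $z' \in \xi^{-1}(z)$ is unambiguous. This yields the formula $\mathcal{L}(b)(z) = \int b\, d\nu_z$ and $\supp(\nu_z) \subseteq \pi^{-1}(\pi(z')) = \xi^{-1}(z)$; $w^*$-continuity of $\nu$ follows from that of $\mu$ and the fact that $[z'] \mapsto \xi(z')$ is a continuous bijection between compact Hausdorff spaces, hence a homeomorphism, so $\nu$ is $\mu$ composed with its inverse.

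For the converse, given a $w^*$-continuous $\nu: Z \to P(Z)$ with $\supp(\nu_z) \subseteq \xi^{-1}(z)$, I would define $\mathcal{L}(b)(z) := \int b\, d\nu_z$. Positivity and unitality are automatic, and for any $u \in \supp(\nu_z) \subseteq \xi^{-1}(z)$ one has $\alpha(a)(u) = a(\xi(u)) = a(z)$, so $\mathcal{L}(\alpha(a)b)(z) = a(z)\int b\, d\nu_z = (a\mathcal{L}(b))(z)$, which is the transfer identity. Uniqueness of $\nu$ is immediate pointwise via Riesz--Markov applied to each $z$, and the explicit description $\nu_z = \mu_{\pi(z')}$ for $z' \in \xi^{-1}(z)$ is exactly what the construction above produces.

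The only real subtlety lies in descending $\mu$ to $\nu$, namely the observation that $\pi(z')$ is a function of $\xi(z')$, which is what permits repackaging $\mu$ as a map indexed by points of $Z$ rather than by equivalence classes in $X$, together with the implicit identification of $X$ with $Z$ via $\xi$ that makes the $w^*$-continuity transparent. Everything else is a straightforward bookkeeping reduction to Proposition~\ref{prop:charcondexp}.
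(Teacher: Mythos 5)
Your proposal is correct and follows essentially the same route as the paper: pass to the conditional expectation $F=\alpha\mathcal{L}$, apply Proposition~\ref{prop:charcondexp} to get $\mu$, and use surjectivity of $\xi$ together with the coincidence of $\pi$-fibers and $\xi$-fibers to set $\nu_z=\mu_{\pi(z')}$ for $z'\in\xi^{-1}(z)$. The only cosmetic difference is that the paper obtains $\nu$ directly from Proposition~\ref{prop:positivemaps} applied to $\mathcal{L}$ and then identifies it with $\mu_{\pi(z')}$, whereas you construct $\nu$ by hand and check well-definedness and $w^*$-continuity explicitly.
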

\begin{proof}
Suppose that $\mathcal{L}:B\to B$ is a
transfer operator for $\alpha$. Then $F:=\alpha\mathcal{L}$ is a
conditional expectation onto $A$. By
Proposition~\ref{prop:charcondexp} we have 
$F(b)(x)=\int_{\pi^{-1}(x)}b(u)d\mu_x(u)$, for a unique
$w^*$-continuous map $\mu:X\to P(Z)$. Consequently we have 
$F(b)(z)=\int_{\pi^{-1}(\pi(z))}b(u)d\mu_{\pi(z)}(u)$. Since $\xi$ is
onto, for $z\in Z$ there exists $z'$ such that 
$z=\xi(z')$. Then, as $F=\alpha\mathcal{L}$, we get: 
$\mathcal{L}(b)(z)=\mathcal{L}(b)(\xi(z'))
=F(b)(z')
=\int_{\pi^{-1}(\pi(z'))}b(u)d\mu_{\pi(z')}(u)
=\int_{\xi^{-1}(z)}b(u)d\mu_{\pi(z')}(u)$, $\forall b\in 
B$, $z'\in\xi^{-1}(z)$. So if $\nu:Z\to P(Z)$ is the map associated 
to the unital positive map $\mathcal{L}$ on $B$
by~\ref{prop:positivemaps}, we have, for $z,z'\in Z$, with
$\xi(z')=z$:   
$\mathcal{L}(b)(z)
=\int_{\xi^{-1}(z)}b(u)d\mu_{\pi(z')}(u)
=\int_Zb(u)d\nu_{z}(u)$, and therefore
$\nu_{z}=\mu_{\pi(z')}$, $\forall z'\in\xi^{-1}(z)$. In
particular, if $\xi(z')=z$, then: 
$\supp(\nu_z)=\supp\mu_{\pi(z')}\subseteq\pi^{-1}(\pi(z')) 
=\xi^{-1}(\xi(z'))=\xi^{-1}(z)$. Conversely, it is readily checked
that a map $\mathcal{L}$ given by \eqref{eq:transfer} for such a map
$\nu$ is a transfer operator for $\alpha$.   
\end{proof}
\par As an immediate consequence of the above result we have
\begin{cor}\label{cor:transfer2}
Let $\alpha$ be as in Corollary~\ref{cor:transfer}. Then the map
$\mathcal{L}\mapsto \alpha\mathcal{L}$ is a bijective correspondence
between the sets of transfer operators for $\alpha$ and conditional   
expectations onto $\alpha(B)$. Moreover $\alpha\mathcal{L}$ is
faithful if and only if so is $\mathcal{L}$. 
\end{cor}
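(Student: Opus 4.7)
The plan is to reduce the claim to the measure-theoretic parametrizations provided by Proposition~\ref{prop:charcondexp} and Corollary~\ref{cor:transfer}.

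First I would check that for any transfer operator $\mathcal{L}$, the composition $\alpha\mathcal{L}$ is a conditional expectation onto $A=\alpha(B)$: positivity and range are obvious, unitality follows from $\mathcal{L}(1)=1$ (automatic because the $\nu_z$ in Corollary~\ref{cor:transfer} are probability measures), idempotence from $\mathcal{L}(\alpha(\mathcal{L}(b)))=\mathcal{L}(b)\mathcal{L}(1)=\mathcal{L}(b)$, and the bimodule property is the transfer identity itself. The map $\mathcal{L}\mapsto\alpha\mathcal{L}$ is injective because $\alpha$ is. For surjectivity, given a conditional expectation $F:B\to A$, set $\mathcal{L}:=\alpha^{-1}\circ F$, using that $\alpha:B\to A$ is a bijection; the transfer identity for this $\mathcal{L}$ follows routinely from the $A$-bimodule property of $F$ and the homomorphism property of $\alpha$.

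For the faithfulness equivalence, I would use the explicit measure descriptions. From Corollary~\ref{cor:transfer}, $\mathcal{L}$ is determined by the family $\{\nu_z\}_{z\in Z}$ with $\supp(\nu_z)\subseteq\xi^{-1}(z)$, and the proof of that corollary identifies the measure $\mu$ of $\alpha\mathcal{L}$ by $\mu_{\pi(z)}=\nu_{\xi(z)}$ (consistent with $\bar\xi\circ\pi=\xi$, where $\bar\xi:X\to Z$ is the homeomorphism of Example~\ref{exmp:endo}). A positive $b\in B$ satisfies $\mathcal{L}(b)=0$ iff $b$ vanishes on every $\supp(\nu_w)$, so $\mathcal{L}$ is faithful iff the set $S:=\bigcup_{w\in Z}\supp(\nu_w)$ is dense in $Z$. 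By Proposition~\ref{prop:charcondexp}, $\alpha\mathcal{L}$ is faithful iff $Z_\mu=\{z\in Z : z\notin\supp(\mu_{\pi(z)})\}=\{z\in Z : z\notin\supp(\nu_{\xi(z)})\}$ has empty interior.

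The decisive observation that links these two conditions is that the supports $\supp(\nu_w)$ lie in pairwise disjoint fibres of $\xi$: a point $z$ belongs to $\supp(\nu_w)$ only when $w=\xi(z)$. Consequently
\[S=\{z\in Z : z\in\supp(\nu_{\xi(z)})\}=Z\setminus Z_\mu,\]
so density of $S$ is equivalent to $Z_\mu$ having empty interior, which yields the desired equivalence. There is no real obstacle; the only mildly delicate point is keeping straight the three maps $\xi$, $\pi$, $\bar\xi$ related by $\bar\xi\circ\pi=\xi$, after which everything reads off directly from the two characterization results already established.
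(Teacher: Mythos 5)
Your proof is correct, but it takes a substantially different (and heavier) route than the paper for the faithfulness assertion. The paper's entire proof is one line: the bijective correspondence is regarded as already contained in Corollary~\ref{cor:transfer} (which matches transfer operators with the measures $\nu$, themselves determined by the $\mu$ of $\alpha\mathcal{L}$), and the faithfulness equivalence ``follows from the injectivity of $\alpha$'' --- indeed $\alpha\mathcal{L}(b^*b)=0\iff\mathcal{L}(b^*b)=0$ because $\alpha$ is injective, with no measures needed. Your treatment of the bijection is arguably cleaner than the paper's: defining $\mathcal{L}:=\alpha^{-1}\circ F$ on the range of the injective homomorphism $\alpha$ and verifying the transfer identity from the $A$-bimodule property is a purely algebraic argument that does not use commutativity of $B$ at all, so it generalizes beyond the setting of Corollary~\ref{cor:transfer}. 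By contrast, your faithfulness argument --- identifying $\bigcup_w\supp(\nu_w)$ with $Z\setminus Z_\mu$ via the disjointness of the fibres $\xi^{-1}(w)$ and comparing density with empty interior --- is a correct but roundabout re-derivation of something the injectivity of $\alpha$ gives immediately; it also quietly re-proves part of Proposition~\ref{prop:charcondexp}. The one point worth retaining from your version is the explicit observation $\nu_{\xi(z)}=\mu_{\pi(z)}$, which makes the support bookkeeping transparent, but for the statement at hand the elementary argument is strictly preferable.
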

\begin{proof}
The last assertion, which is not implied by \ref{cor:transfer},
follows from the injectivity of $\alpha$.  
\end{proof}
 When the quotient map $\pi:Z\to X$ is a covering map we can
 be more precise:
\begin{cor}\label{cor:contcoc}
Suppose that the quotient map $\pi:Z\to X$ is a covering map. Then a
linear map $F:B\to A$ is an onto conditional 
expectation if and only if there exists a continuous map 
$\omega:Z\to [0,1]$ such that $\sum_{z\in\pi^{-1}(x)}\omega(z)=1$,
$\forall x\in X$, and $F(b)(x)=\sum_{z\in\pi^{-1}(x)}\omega(z)b(z)$,
$\forall b\in B$, $x\in X$. In this case the map $\omega$ is unique,
and $F$ is faithful if and only if the set $\omega^{-1}(0)$ is nowhere
dense.    
\end{cor}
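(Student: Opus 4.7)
The plan is to specialize Proposition~\ref{prop:charcondexp} using the local structure of the covering map $\pi:Z\to X$, which makes each fiber $\pi^{-1}(x)$ finite and discrete. Given an onto conditional expectation $F:B\to A$, Proposition~\ref{prop:charcondexp} provides a unique $w^*$-continuous $\mu:X\to P(Z)$ with $\supp(\mu_x)\subseteq\pi^{-1}(x)$ and $F(b)(x)=\int b\,d\mu_x$. Because each $\pi^{-1}(x)$ is a finite discrete set, we may write $\mu_x=\sum_{z\in\pi^{-1}(x)}\omega(z)\delta_z$ for uniquely determined numbers $\omega(z)\in[0,1]$ with $\sum_{z\in\pi^{-1}(x)}\omega(z)=1$. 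This defines $\omega:Z\to[0,1]$, and the integral formula becomes the required sum; uniqueness of $\omega$ follows from that of $\mu$.

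The main obstacle is proving continuity of $\omega$. I would fix $z_0\in Z$, set $x_0:=\pi(z_0)$, and choose an evenly covered open neighborhood $U$ of $x_0$ with local sections $s_1,\ldots,s_n:U\to Z$ having pairwise disjoint open images and $s_1(x_0)=z_0$. For a smaller open $V\ni x_0$ with $\overline{V}\subseteq U$, Urysohn's lemma (applied in the compact Hausdorff space $Z$) yields $f\in C(Z)$ that is identically $1$ on $s_1(\overline{V})$ and vanishes outside $s_1(U)$. Then for $x\in V$, the only $z\in\pi^{-1}(x)$ at which $f$ does not vanish is $s_1(x)$, and $f(s_1(x))=1$, so
\[
\int_Z f\,d\mu_x = \omega(s_1(x)).
\]
The left-hand side is continuous in $x$ by $w^*$-continuity of $\mu$, hence $\omega\circ s_1$ is continuous on $V$. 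Since $s_1$ is a homeomorphism onto a neighborhood of $z_0$, $\omega$ is continuous at $z_0$.

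For the converse, given a continuous $\omega$ with the stated properties, define $F$ by the pointwise sum. Restricted to $\pi^{-1}(U)=\bigsqcup_{i=1}^n s_i(U)$, the function $F(b)$ reads $x\mapsto \sum_{i=1}^n\omega(s_i(x))b(s_i(x))$, a finite sum of continuous functions, so $F(b)\in A$. Linearity, positivity, unitality, the $A$-bimodule property and $F^2=F$ are immediate from the pointwise formula, so $F$ is an onto conditional expectation.

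Finally, since $\mu_x=\sum_{z\in\pi^{-1}(x)}\omega(z)\delta_z$, the support of $\mu_x$ is exactly $\{z\in\pi^{-1}(x):\omega(z)>0\}$, whence $Z_\mu=\omega^{-1}(0)$. Proposition~\ref{prop:charcondexp} then states that $F$ is faithful if and only if the interior of $Z_\mu$ is empty, that is, if and only if $\omega^{-1}(0)$ is nowhere dense.
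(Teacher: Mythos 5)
Your proposal is correct and follows essentially the same route as the paper: reduce to Proposition~\ref{prop:charcondexp}, identify the measures $\mu_x$ supported on the finite fibers with a weight function $\omega$, and prove continuity of $\omega$ by integrating a bump function supported on a single sheet over $\mu_{\pi(z)}$ and invoking $w^*$-continuity. The only cosmetic difference is that you verify the converse direction (that a continuous $\omega$ yields a conditional expectation) by a direct local computation over an evenly covered neighborhood, where the paper simply notes that continuity of $\omega$ gives $w^*$-continuity of $\mu$ and cites Proposition~\ref{prop:charcondexp}.
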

\begin{proof}
If $x\in X$, then $\pi^{-1}(x)$ is finite, because $\pi$ is
a local homeomorphism and $Z$ is compact. Thus a map $\mu:X\to P(Z)$ 
such that $\supp(\mu_x)\subseteq\pi^{-1}(x)$ is nothing but a map
$\omega:Z\to [0,1]$ such that $\sum_{z\in\pi^{-1}(x)}\omega(z)=1$ and 
$\mu_x=\sum_{z\in\pi^{-1}(x)}\omega(z)\delta_z$, $\forall x\in
X$. Then, if $\omega$ is continuous, $\mu$ is
$w^*$-continuous. Suppose conversely that $\mu$ is $w^*$-continuous. 
Fix $z_0\in Z$, and let $V$ be an open neighborhood of $z_0$ on which
the restriction of $\pi$ is a homeomorphism onto its image. Let $b\in
C(Z)$ be such that $\textrm{supp}(b)\subseteq V$, and $b=1$ on a
neighborhood $U$ of $z_0$. If $z\in U$: 
\[\omega(z)=\omega(z)b(z)
=\sum_{z'\in\pi^{-1}(\pi(z))}\omega(z')\int_Z
b\,d\delta_{z'}
=\int_Zb\,d\mu_{\pi(z)}
\]
Since $\mu$ is $w^*$-continuous and $\pi$ is continuous, it follows
that $\omega$ also is continuous because:  
\[\lim_{z\to z_0}\omega(z)=\lim_{z\to z_0}\int_Zb\,d\mu_{\pi(z)}
                         =\int_Zb\,d\mu_{\pi(z_0)}=\omega(z_0)
\]
\par Note finally that if $z\in\pi^{-1}(x)$, then $z\in\supp(\mu_x)$
if and only if $\omega(z)\neq 0$. The proof now follows by combining
the considerations above with Proposition~\ref{prop:charcondexp}.  
\end{proof}
\par If, in the situation of Corollary~\ref{cor:contcoc}, the map
$\omega$ exists and is positive, it follows from
\cite[Proposition~2.8.9]{w} that the associated conditional
expectation $F$ is of index-finite type (in the sense of Watatani,
\cite{w}), and moreover $\textrm{Index}\,F(z)=1/\omega(z)$, $\forall 
z\in Z$. In particular $F$ is faithful, a fact that also follows
from~\ref{prop:charcondexp} and~\ref{cor:transfer2}. Conversely, if
$F$ is of index-finite type, then 
$\textrm{Index}\,F$ is positive (\cite[Lemma~2.3.1]{w}) and 
$\omega(z)=1/\textrm{Index}\,F(z)$.  
\begin{exmp}
Consider the covering map $\theta:S^1\to S^1$
  given by $\theta(z)=z^2$, and let $\alpha:B\to B$ be its dual map,
  where $B=C(S^1)$. Then $A:=\alpha(B)=\{b\in B:b(z)=b(-z),\,\forall 
  z\in S^1\}$. Consider any continuous function $\omega':[0,\pi]\to
  [0,1]$ such that $\omega'(\pi)=1-\omega'(0)$, and let
  $\omega:[0,2\pi]\to[0,1]$ be the extension of $\omega'$ such that 
  $\omega(t)=1-\omega'(t-\pi)$, $\forall t\in(\pi,2\pi]$. Since
  $\omega$ is continuous and $\omega(0)=\omega(2\pi)$, we can look at
  $\omega$ as a continuous map from $S^1$ into $[0,1]$.  
  It is clear that $\sum_{z^2=z_0}\omega(z)=1$, $\forall z_0\in
  S^1$. Therefore by~\ref{cor:contcoc}
  $\omega$ defines a conditional expectation $F_\omega$. Consider the
  construction above for the following three cases:
  $\omega'_1(t)=\frac{1}{2}$, $\omega'_2(t)=\frac{t}{\pi}$ and
  $\omega'_3(t)=\begin{cases}0&\textrm{ if }t\in[0,\frac{\pi}{2}]\\
                             \frac{2t}{\pi}-1&\textrm{ if
                             }t\in[\frac{\pi}{2},\pi]\end{cases}$.
  Then $F_{\omega_1}$ is of index-finite type, $F_{\omega_2}$
  is faithful but not of index-finite type, and $F_{\omega_3}$ is not
  faithful.     
\end{exmp}
\begin{cor}\label{cor:contcoc2}
Let $\xi:Z\to Z$ be a covering map, and let $\alpha:B\to B$ be its
dual map, with range $A=C(X)$. Then a linear map
$\mathcal{L}:B\to B$ is a transfer operator for $\alpha$ if and only
if  there exists a continuous map $\omega:Z\to [0,1]$ such that 
$\sum_{z\in\pi^{-1}(x)}\omega(z)=1$, $\forall x\in X$, and   
\begin{equation}\label{eq:cortransfer}
\mathcal{L}(b)(z)=\sum_{z'\in\xi^{-1}(z)}\omega(z')b(z'),\ \forall
z\in Z. 
\end{equation}
In this case the map $\omega$ is unique. 
\end{cor}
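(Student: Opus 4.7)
The plan is to deduce the statement from Corollary~\ref{cor:contcoc} via the bijective correspondence $\mathcal{L}\mapsto\alpha\mathcal{L}$ of Corollary~\ref{cor:transfer2}. First I would observe that, because $\alpha$ is the dual of the onto map $\xi$, Example~\ref{exmp:endo} provides a homeomorphism $\bar\xi:X\to Z$ satisfying $\xi=\bar\xi\circ\pi$, where $\pi:Z\to X$ is the quotient map associated with the inclusion $A\subseteq B$. Hence $\pi=\bar\xi^{-1}\circ\xi$ inherits the covering-map property of $\xi$, and for every $z'\in Z$ one has $\pi^{-1}(\pi(z'))=\xi^{-1}(\xi(z'))$. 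This is the bridge that allows the hypothesis on $\xi$ to feed into Corollary~\ref{cor:contcoc}, which was stated in terms of $\pi$.

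Given a transfer operator $\mathcal{L}$ for $\alpha$, Corollary~\ref{cor:transfer2} makes $F:=\alpha\mathcal{L}$ a conditional expectation onto $A$, and Corollary~\ref{cor:contcoc} then supplies a unique continuous $\omega:Z\to[0,1]$ with $\sum_{z\in\pi^{-1}(x)}\omega(z)=1$ and $F(b)(x)=\sum_{z\in\pi^{-1}(x)}\omega(z)b(z)$. Writing any $z\in Z$ as $z=\xi(z')$ and using $F(b)(\pi(z'))=\alpha\mathcal{L}(b)(z')=\mathcal{L}(b)(\xi(z'))$, together with the fiber identification from the first step, turns the $\omega$-formula for $F$ into the desired formula~\eqref{eq:cortransfer} for $\mathcal{L}$. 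Conversely, given a continuous $\omega$ with unit fiber-sums, the operator $\mathcal{L}$ defined by~\eqref{eq:cortransfer} satisfies, by the same computation, $\alpha\mathcal{L}(b)(x)=\sum_{z\in\pi^{-1}(x)}\omega(z)b(z)$, so $\alpha\mathcal{L}$ is the conditional expectation associated with $\omega$ in Corollary~\ref{cor:contcoc}, and hence $\mathcal{L}$ is a transfer operator for $\alpha$ by Corollary~\ref{cor:transfer2}. Uniqueness of $\omega$ is inherited from the uniqueness clauses of both invoked corollaries.

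I do not anticipate a substantial obstacle: the analytical content has already been packaged into Corollaries~\ref{cor:transfer2} and~\ref{cor:contcoc}, and what remains is essentially a change-of-variables along $\xi$. The only point that deserves careful bookkeeping is matching the sums indexed by fibers of $\pi$ with those indexed by fibers of $\xi$, which is the single place where the covering-map hypothesis on $\xi$ itself (rather than only on $\pi$) is invoked.
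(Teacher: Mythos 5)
Your proposal is correct and follows essentially the same route as the paper's proof: both reduce to Corollary~\ref{cor:contcoc} via the correspondence $\mathcal{L}\mapsto\alpha\mathcal{L}$, using the two key observations that $\pi$ is a covering map because $\xi$ is (via the homeomorphism $\bar\xi$ with $\xi=\bar\xi\circ\pi$) and that $\xi^{-1}(\xi(z'))=\pi^{-1}(\pi(z'))$. Your write-up merely makes explicit the change of variables and the appeal to injectivity of $\alpha$ that the paper leaves implicit.
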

\begin{proof}
Since $\xi$ is a covering map if and only if so is $\pi$, and
$\xi^{-1}(z)=\pi^{-1}(\pi(z))$, $\forall z\in Z$, our claims 
follow from Corollary~\ref{cor:contcoc}, 
Corollary~\ref{cor:transfer}, and their proofs.
\end{proof}
\subsection{Exel--Renault interaction groups}
\par Suppose again that $P$ is an Ore semigroup with enveloping group
$G$, so $G=P^{-1}P$, and
that $\theta:P\times X\to X$ is a right action, where $X$ is a  
compact Hausdorff space, and each $\theta_t$ is a covering map. Then
$\theta$ induces a left action $\alpha:P\times 
A\to A$, where $A=C(X)$ and $\alpha_t(a)=a\circ\theta_t$, $\forall
a\in A$. Suppose in addition that $V:G\to B(A)$ is an interaction
group that extends $\alpha$, that is, $V_t=\alpha_t$, $\forall t\in
P$. For each $t\in P$ let $X_t$ be the spectrum of the
$C^*$-subalgebra $A_t:=V_t(A)$, so $A_t=C(X_t)$, and let $\pi_t:X\to
X_t$ be the corresponding canonical projection. Note that 
$\pi_t(x)=\pi_t(x')\iff\theta_t(x)=\theta_t(x')$ and 
$\pi_t$ is a local homeomorphism because $\theta_t$ is. Since  
$E_t:=V_tV_{t^{-1}}:A\to A_t$ is a conditional expectation, by
Corollary~\ref{cor:contcoc} there exists a unique map
$\omega_t:X\to [0,1]$ such that
$E_t(a)(x)=\sum\limits_{\theta_t(y)=\theta_t(x)}\omega_t(y)a(y)$,
$\forall a\in A$, $x\in X$. Thus associated with the family
of conditional expectations $\{E_t\}_{t\in P}$
there is a unique map 
$\omega:P\times X\to [0,1]$ such that 
\[
E_t(a)(x)=\sum\limits_{\theta_t(y)=\theta_t(x)}\omega(t,y)a(y)
\]
$\forall t\in P, a\in A$, $x\in X$. This map $\omega$ is continuous
and satisfies 
\begin{equation}\label{eq:normal}
\sum_{y\in\theta_t^{-1}(x)}\omega(t,y)=1,
\end{equation} 
$\forall t\in P$, $x\in X$.  
The map $\omega$ also satisfies \textit{the cocycle property:}
\begin{equation}\label{eq:cocycle}
\omega(rs,x)=\omega(r,x)\omega(s,\theta_s(x)), 
\end{equation} $\forall r,s\in P$, $x\in X$,   
which reflects the fact that
$V_{s^{-1}r^{-1}}=V_{s^{-1}}V_{r^{-1}}$, $\forall r,s\in P$. Moreover,
due to the commutativity of the conditional expectations $E_s$ and
$E_r$, $\omega$~also satifies \textit{the coherence property:}        
\begin{equation}\label{eq:coherence}
\omega(s,x)W_r(C_{x,y}^{s,r})
=\omega(r,x)W_s(C_{x,y}^{r,s}),
\end{equation} $\forall r,s\in P$, $x,y\in X,$
where, for $S\subseteq X$, we put $W_r(S):=\sum_{x\in
  S}\omega(r,x)$, and $C_{x,y}^{s,r}=C_x^s\cap C_y^r$, with
$C_x^s=\theta_s^{-1}(\theta_s(x))$. 
\par Since $V_{r^{-1}}$ is a transfer operator for $\alpha_r$, $r\in
P$, it follows by Corollary~\ref{cor:contcoc2} that 
$V_{r^{-1}}(a)(x)= \sum_{y\in \theta_r^{-1}(x)}\omega(r,y)a(y)$,
$\forall a\in A$, $x\in X$. Then, as $V_{s^{-1}}V_s=Id_A$, 
$\forall s\in P$, if $t=r^{-1}s\in G$, $r,s\in P$, we have:
$V_t=V_{r^{-1}s}=V_{r^{-1}s}V_{s^{-1}}V_s=V_{r^{-1}}V_sV_{s^{-1}}V_s
=V_{r^{-1}}V_s=V_{r^{-1}}\alpha_s$. Therefore:     
\begin{equation}\label{eq:igext}
 V_t(a)(x)=V_{r^{-1}}\alpha_s(a)(x)
=\sum_{y\in\theta_r^{-1}(x)}\omega(r,y)a(\theta_s(y)),
\end{equation}
$\forall a\in A$, $x\in X$. 
\par We recall from \cite{exren} the following definition:
\begin{defn}\label{defn:cocycle}
A continuous map $\omega:P\times X\to [0,1]$, such that $w(t,y)>0$
$\forall (t,y)\in P\times X$ will be called a normalized coherent
cocycle or just cocycle for the action $\theta$ if it satisfies
(\ref{eq:normal}), (\ref{eq:cocycle}) and~(\ref{eq:coherence}).   
\end{defn}     
\par It is proved in \cite[Theorem 2.8]{exren} that every
normalized coherent cocycle associated with 
  $\theta$ defines, by means of formula~\eqref{eq:igext}, an
  interaction group $V^{\omega}$ that extends  
  $\alpha$. Such a $V^{\omega}$ will be called an
  \textit{Exel--Renault interaction group}. 
 Since each $E_t$ is an index-finite type conditional expectation, as
 observed after Corollary~\ref{cor:contcoc}, we must have
 $\omega(t,x)=1/\textrm{Index} \,E_t(x)$, $\forall t\in P,x\in
 X$. Therefore, if $r,s\in P$ and $t=r^{-1}s$, from \eqref{eq:igext}
 we have: 
\begin{equation}\label{eq:igext2}
 V_t(a)(x)=V_{r^{-1}}\alpha_s(a)(x)
=\sum_{y\in\theta_r^{-1}(x)}\frac{a(\theta_s(y)) }{\textrm{Index}
  \,E_r(y)}
\end{equation}

If $V$ and $V'$ are Exel-Renault interaction groups  which extend
$\alpha$, formula~\eqref{eq:igext2} gives us a simple relation between
them: in fact, if $t=r^{-1}s\in G$, with $r,s\in P$, then with obvious
notation we have  
\begin{equation}\label{eq:unique}
V_t(a)=V_{r^{-1}}\alpha_s(a)
=V'_{r^{-1}}\Big
(\frac{\textrm{Index}E'_r}{\textrm{Index}E_r}\alpha_s(a)\Big ), 
\ \forall a\in A. 
\end{equation}    


\subsection{Dilations of Exel--Renault interaction
  groups}\label{subsec:dilexren} 
\par In this final paragraph we will show that every Exel-Renault
interaction group has a minimal faithful dilation. 
\par Let $V=V^\omega:G\to B(A)$ be an Exel-Renault interaction group
extending $\alpha:P\to B(A)$ as in the previous paragraph. Let
$(B,\beta,F)$ be the minimal admissible 
dilation of $V$. From properties 1. and~2. of
Theorem~\ref{thm:ml} we have that $B$ is the direct 
limit of copies of $A$ with 
connecting maps $\alpha_r$ (alternatively see the proof of this result
in \cite{ml}). Then $B=C(Z)$, where $(Z,\{q_r\}_{r\in
P})$ is the inverse limit of the system $(\{X_r=X\}_{r\in
P},\{\theta_{sr^{-1}}\}_{e\leq r\leq s}\}$. Concretely, we have
$Z=\{z:P\to X/\, z(r)=\theta_{sr^{-1}}(z(s)),\forall r,s\in P, 
r\leq s\}$,  and $q_r:Z\to X$ given by $q_r(z)=z(r)$. The dual (right)
action $\hat{\beta}$ of $\beta$ is described by the following  
formulae. For $t\in P$ and $z\in Z$, to compute
$\hat{\beta}_tz(r)$ we choose $s\in P$ such that $s\geq r,t$. Then
we have $\hat{\beta}_tz(r)=\theta_{sr^{-1}}(z(st^{-1}))$ 
and $\hat{\beta}_t^{-1}z(r)=z(rt)$. In other words, if $t\in P$, then 
$q_r\hat{\beta}_t=\theta_{sr^{-1}}q_{st^{-1}}$, $\forall s\geq r,t$,
and $q_r\hat{\beta}_t^{-1}=q_{rt}$. In particular
$q_e\hat{\beta}_t=\theta_tq_e$. The inclusion of $A$ into $B$ is given
by $a\mapsto aq_e$, $\forall a\in A$. Note that
$Z_x:=q_e^{-1}(x)=\{z\in Z: z(e)=x\}$ is
the inverse limit of the system $(\{Z_x(r)\}_{r\in
  P},\{\theta_{sr^{-1}}\}_{e\leq r\leq s}\}$, where
$Z_x(r)=\theta_r^{-1}(x)$.       
\begin{lem}\label{lem:basis}
Let $(\{Y_r\}_{r\in P},\{\sigma_r^s\}_{r\leq s})$ be an inverse
  limit of topological spaces with inverse limit $(Y,\{p_r\})$. Then
  the family $\mathcal{V}:=\{p_r^{-1}(V):\, r\in P,\, V\subseteq Y_r  
  \textrm{ open subset}\}$ is a basis for the topology of $Y$.  
\end{lem}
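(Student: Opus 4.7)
The plan is to identify the topology on the inverse limit $Y$ with the subspace topology inherited from the product $\prod_{r\in P} Y_r$ equipped with the product topology, and then to use directedness of $P$ (guaranteed by the Ore semigroup hypothesis, as recalled at the start of Section~2) to collapse finite intersections of cylinder sets into a single cylinder.

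First I would note that every element of $\mathcal{V}$ is open: for $U\subseteq Y_r$ open, continuity of $p_r:Y\to Y_r$ gives $p_r^{-1}(U)$ open in $Y$. So it remains to show that every open subset of $Y$ is a union of sets in $\mathcal{V}$. Recall that a basis for the product topology on $\prod_{r\in P} Y_r$ is given by finite intersections $\bigcap_{i=1}^n \mathrm{pr}_{r_i}^{-1}(V_i)$, where $\mathrm{pr}_{r_i}$ is the canonical projection and $V_i\subseteq Y_{r_i}$ is open. Intersecting with $Y$, a basis for the topology of $Y$ is therefore formed by the sets of the form $\bigcap_{i=1}^n p_{r_i}^{-1}(V_i)$ for finitely many indices $r_1,\dots,r_n\in P$ and open subsets $V_i\subseteq Y_{r_i}$.

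Given such a basic open set, I would use that $P$ is directed to choose some $s\in P$ with $s\geq r_i$ for every $i=1,\dots,n$. The compatibility condition defining the inverse limit gives $p_{r_i}=\sigma_{r_i}^{s}\circ p_s$, hence $p_{r_i}^{-1}(V_i)=p_s^{-1}((\sigma_{r_i}^{s})^{-1}(V_i))$. Setting
\[
W:=\bigcap_{i=1}^{n}(\sigma_{r_i}^{s})^{-1}(V_i),
\]
which is open in $Y_s$ by continuity of the bonding maps, I obtain
\[
\bigcap_{i=1}^{n}p_{r_i}^{-1}(V_i)=p_s^{-1}(W)\in\mathcal{V}.
\]
Thus every basic open set of $Y$ already lies in $\mathcal{V}$, proving that $\mathcal{V}$ is a basis for the topology of $Y$.

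No step is genuinely an obstacle here; the only point worth stressing is the use of directedness to reduce a finite intersection of $p_{r_i}^{-1}$--preimages to a single $p_s^{-1}$--preimage, which is precisely the feature of inverse limits over directed index sets (as opposed to arbitrary small categories) that the subsequent arguments rely on.
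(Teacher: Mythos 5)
Your proof is correct and takes essentially the same route as the paper: both reduce a finite intersection $\bigcap_{i}p_{r_i}^{-1}(V_i)$ of subbasic sets to a single preimage under $p_s$ by using directedness of $P$ to choose $s\geq r_i$ and the factorization $p_{r_i}=\sigma_{r_i}^{s}\circ p_s$. The only (cosmetic) difference is that the paper argues pointwise, producing for each $y$ a neighborhood $V$ of $p_s(y)$ with $p_s^{-1}(V)$ inside the intersection, whereas you observe the exact identity $\bigcap_{i}p_{r_i}^{-1}(V_i)=p_s^{-1}\bigl(\bigcap_{i}(\sigma_{r_i}^{s})^{-1}(V_i)\bigr)$.
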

\begin{proof}
Let $V_1,\ldots,V_n$ be open subsets of $Y_r$ and $r_1,\ldots,r_n\in
P$. Suppose $y\in\bigcap_{j=1}^np_{r_j}^{-1}(V_j)$. Pick any element
$s\in P$ such that $s\geq r_j$, $\forall j=1,\ldots,n$. Then
$\sigma_{r_j}^sp_s(y)=p_{r_j}(y)$, $\forall j=1,\ldots,n$. Since
every $\sigma_{r_j}^s$ is continuous, there exists an open
neighborhood $V$ of $p_s(y)$ such that $\sigma_{r_j}^s(V)\subseteq
V_j$, $\forall j=1,\ldots,n$. Thus $y\in
p_s^{-1}(V)\subseteq\bigcap_{j=1}^np_{r_j}^{-1}(V_j)$, which shows 
that $\mathcal{V}$ is a basis for the topology of~$Y$, since 
it is already a sub--basis for it.  
\end{proof}
\par We have next the main result of this section. 
\begin{thm}\label{thm:exren}
Every Exel-Renault interaction group has a minimal faithful dilation,
unique up to isomorphism. 
\end{thm}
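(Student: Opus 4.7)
The plan is to apply Theorem~\ref{thm:main} to $V=V^{\omega}$ to obtain a minimal admissible dilation $(i,(B,\beta,F))$ and then show that the $F$ so constructed is faithful. Once faithfulness is established, uniqueness is free: by Corollary~\ref{cor:minfaithful} any minimal faithful dilation is admissible, and Theorem~\ref{thm:main} then forces it to be isomorphic to $(B,\beta,F)$. So the only genuine task is to verify faithfulness of the specific $F$ produced by the proof of Theorem~\ref{thm:main}, working in the inverse-limit description of $(Z,\{q_r\}_{r\in P})$ recalled at the start of \ref{subsec:dilexren}.

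By Proposition~\ref{prop:charcondexp}, faithfulness of $F$ will follow as soon as I show that the $w^*$-continuous family $\mu:X\to P(Z)$ representing $F$ satisfies $\supp(\mu_x)=q_e^{-1}(x)=:Z_x$ for every $x\in X$. One inclusion is automatic, so the task is $Z_x\subseteq\supp(\mu_x)$. The strategy is to evaluate $\mu_x$ on the basic open sets $q_r^{-1}(V)$ supplied by Lemma~\ref{lem:basis}. From the identity $q_r\hat\beta_t^{-1}=q_{rt}$ in the preamble one reads off $\beta_{r^{-1}}(i(a))=a\circ q_r$; combining this with the formula $F_0(b)=\mathcal{L}_t\beta_t(b)$ from the proof of Theorem~\ref{thm:main} and the explicit description of the transfer operator $\mathcal{L}_r=V_{r^{-1}}$ via Corollary~\ref{cor:contcoc2} yields
\begin{equation*}
\int_Z (a\circ q_r)\,d\mu_x=F(a\circ q_r)(x)=V_{r^{-1}}(a)(x)=\sum_{y\in\theta_r^{-1}(x)}\omega(r,y)a(y),
\end{equation*}
so that the pushforward is $(q_r)_*\mu_x=\sum_{y\in\theta_r^{-1}(x)}\omega(r,y)\delta_y$.

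To conclude, fix $z_0\in Z_x$ and an open neighborhood $U\subseteq Z$ of $z_0$. By Lemma~\ref{lem:basis} there exist $r\in P$ and an open $V\subseteq X$ with $z_0\in q_r^{-1}(V)\subseteq U$; in particular $y_0:=z_0(r)\in V\cap\theta_r^{-1}(x)$. The pushforward formula then gives
\begin{equation*}
\mu_x(U)\geq\mu_x(q_r^{-1}(V))=\sum_{y\in V\cap\theta_r^{-1}(x)}\omega(r,y)\geq\omega(r,y_0)>0,
\end{equation*}
where strict positivity is by Definition~\ref{defn:cocycle}. Hence every neighborhood of $z_0$ has positive $\mu_x$-measure, so $z_0\in\supp(\mu_x)$ and $F$ is faithful.

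The step I expect to be the main hurdle is purely bookkeeping: translating the abstract construction of $F$ in Theorem~\ref{thm:main} into the inverse-limit coordinates, i.e.\ verifying the identification $\beta_{r^{-1}}(i(a))=a\circ q_r$ and the resulting formula $F(a\circ q_r)=V_{r^{-1}}(a)$. Once those are in hand, strict positivity of $\omega$ and the basis lemma carry the argument without further obstacles.
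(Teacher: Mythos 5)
Your proposal is correct and follows essentially the same route as the paper: both reduce faithfulness to showing $\supp(\mu_x)=Z_x$ via Proposition~\ref{prop:charcondexp}, both identify $\mu_x(q_r^{-1}(y))=\omega(r,y)>0$ by comparing $F\beta_{r^{-1}}i=iV_{r^{-1}}$ with the explicit cocycle formula for $V_{r^{-1}}$, and both invoke Lemma~\ref{lem:basis} to conclude. Your phrasing via the pushforward $(q_r)_*\mu_x$ and your explicit handling of uniqueness are only cosmetic differences from the paper's argument.
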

\begin{proof}
We will use the above just introduced notation. Let $(i,(B,\beta,F))$ 
be the minimal admissible dilation of the 
Exel-Renault interaction group $V:G\to B(A)$, with $i:A\to B$ the
natural inclusion. By \ref{prop:charcondexp} there exists a unique map
$\mu:X\to P(Z)$ such that $F(b)(x)=\int_{Z_x}b(z)d\mu_x(z)$, $\forall
b\in B$ and $x\in X$, and to see that $F$ is faithful is enough to
show that the support of $\mu_x$ is exactly $Z_x$. Observe that, since
$q_e\hat{\beta}_r=\theta_rq_e$, $\forall r\in P$, we have
$Z_x=q_e^{-1}(x)=\hat{\beta}_rq_e^{-1}(Z_x(r))=\biguplus_{y\in
Z_x(r)}\hat{\beta}_r(Z_y)$. Then if $a\in A$, $x\in X$, it follows
that $V_{r^{-1}}(a)(x)=F\beta_{r^{-1}}(a)(x)
=\int_{Z_x}aq_e(\hat{\beta}_r^{-1}(z))d\mu_x(z)$. Therefore   
\begin{equation}\label{eqn:form1}
V_{r^{-1}}(a)(x)
=\sum_{y\in Z_x(r)}
\int_{\hat{\beta}_r(Z_y)}aq_e(\hat{\beta}_r^{-1}(z))d\mu_x(z)\\ 
=\sum_{y\in Z_x(r)}\mu_x(\hat{\beta}_r(Z_y))a(y)
\end{equation}
On the other hand, if $y_0\in Z_x(r)$, then
$V_r^{-1}(a)(x)=V_r^{-1}(a)(\theta_r(y_0))
=E_r(a)(y_0)$. Thus 
\begin{equation}\label{eqn:form2}
V_r^{-1}(a)(x)
=E_r(a)(y_0)
=\sum_{y\in\theta_r^{-1}(\theta_r(y_0))}\omega(r,y)a(y)
=\sum_{y\in Z_x(r)}\omega(r,y)a(y)
\end{equation} 
Comparing (\ref{eqn:form1}) and (\ref{eqn:form2}) we see that, by the
uniqueness of $\omega(r,y)$, we must have
$\mu_x(\hat{\beta}_r(Z_y))=\omega(r,y)>0$. Since by
Lemma~\ref{lem:basis} the family 
$\{q_r^{-1}(y)=\hat{\beta}_r(Z_y): r\in P, y\in Z_x(r)\}$ is a basis
for the topology of $Z_x$, we conclude that the support of $\mu_x$ is
$Z_x$, and hence $F$ is faithful.  
\end{proof}
\begin{exmp} Consider the local homeomorphism $\theta:S^1\to S^1$
  given by $\theta(x)=x^2$, and let $\alpha:A\to A$ be its dual map,
  where $A=C(S^1)$. Then $\mathcal{L}:A\to A$ given by
  $\mathcal{L}(a)(x)=\frac{1}{2}\sum_{\{y:y^2=x\}}a(y)$ is
  a transfer operator for~$\alpha$. It is easy to see that the cocycle
  $\omega:\nt\times S^1\to [0,1]$ associated to the interaction group
  $V$ induced by $\mathcal{L}$ is given by:
  $\omega(n,y)=\frac{1}{2^n}$, $\forall n\in\nt$, $y\in S^1$. Let
  $(i,(B,\beta, F))$ be the minimal dilation of $V$. Then $B=C(Z)$,
  where the space $Z$ is the solenoid: $Z=\{z:\nt\to S^1/\
  z(n)=z(n+1)^2,\, \forall n\in\nt\}$. The inclusion $i:A\to B$ is
  the dual map of $q_0$ ( we use the notation of
  Theorem~\ref{thm:exren}: thus $q_0:Z\to S^1$ is given by
  $q_0(z)=z(0)$). The action $\beta$ is the one determined by the shift
  $\beta(b)(z)(n)=b(z(n+1))$. Thus $\hat{\beta}(z)(n)=z(n+1)$, and
  $q_0\hat{\beta}=\theta q_0$. To 
  find the corresponding conditional expectation $F:B\to A$ we need to 
  describe the measures $\mu_x$ on $Z_x=q^{-1}_0(x)=\{z\in Z:\,
  z(0)=x\}$. Note that, since every $y\in S^1$ has exactly two
  roots, then for each $n\in \nt$ the set $Z_x(n)$ ($=q_n(Z_x)$) has
  $2^n$ elements, namely the roots of $X^{2^n}-x$
  in $\C$. If $y\in Z_x(n)$, then $q_n^{-1}(y)=\hat{\beta}_n(Z_y)$, 
  and therefore $\mu_x$ is determined by the fact that we must have
  $\mu_x(q_n^{-1}(y))=\mu_x(\hat{\beta}_n(Z_y)) 
  =\omega(n,y)=\frac{1}{2^n}$.    
\par We will finish our work by explicitely computing the conditional
expectation $F$ on elements of a dense subalgebra of~$B$. Consider the 
additive group $\mathcal{M}:=\{m:\nt\to\Z/\, m(k)=0\,$
for all
  but a finite set of natural numbers 
$k\}$. Given $m\in\mathcal{M}$
let $b_m\in B$ be given by $b_m=1$ if $m=0$ and
$b_m(z)=\prod_{k\in\nt}z(k)^{m(k)}$. Then 
$b_m\in B$, $b_{m_1+m_2}=b_{m_1}b_{m_2}$ and
$b_{m}^*=b_{-m}$. Therefore the Stone-Weierstrass theorem implies that
the selfadjoint subalgebra $B_0:=\gen\{b_m:m\in\mathcal{M}\}$ of $B$
is dense in $B$. For $m\in\mathcal{M}$ let $\bar{m}$ be defined as
$\bar{m}=0$, if $m=0$, and $\bar{m}=\max\{k\in\nt:\, m(k)\neq
0\}$. Then it is clear that we have
$b_m(z)=\prod_{k=0}^{\bar{m}}z(\bar{m})^{m(k)2^{\bar{m}-k}}
=z(\bar{m})^{2^{\bar{m}}\sum_{k=0}^{\bar{m}}m(k)/2^{k}}$. Note that
the previous formula also works for $m=0$. Therefore, if 
$m\in\mathcal{M}$ and $x\in S^1$:
 $ F(b_m)(x)=\int_{Z_x}b_m(z)\,d\mu_x(z)
         =\sum_{y\in
           Z_x(\bar{m})}\int_{\hat{\beta}_{\bar{m}}(Z_y)}b_m(z)\,d\mu_x(z)$  
and therefore we have 
\begin{equation}\label{eqn:F}  
F(b_m)(x)=\frac{1}{2^{\bar{m}}}\sum_{\{y:\,y^{2^{\bar{m}}}=x\}}
           y^{2^{\bar{m}}\sum_{k=0}^{\bar{m}}m(k)/2^{k}}
\end{equation}
Note that $F(b_m)\in C(S^1)$, that is, the right hand side of equation
\eqref{eqn:F} is a continuous function of $x\in S^1$. 
\end{exmp}
\bigskip
\par The main portion of the present research was done during a visit
of the author to the Universidade Federal de Santa Catarina
(Florian\'opolis, Brazil). The author wishes to thank Ruy Exel and the
people of the Departamento de Matem\'atica there for their warm
hospitatility.

\end{document}